\newcommand{\romanletters}
{\renewcommand{\theenumi}{\roman{enumi}}
\renewcommand{\labelenumi}{\textup{(}\theenumi\textup{)}}}
\newcommand{\letters}
{\renewcommand{\theenumi}{\alph{enumi}}
\renewcommand{\labelenumi}{\textup{(}\theenumi\textup{)}}}
\newcommand{\itemref}[1]{\eqref{#1}}
\newcommand{\N}{\mathbb N}
\newcommand{\C}{\mathbb C}
\newcommand{\Q}{\mathbb Q}
\newcommand{\ga}{\gamma}
\DeclareMathOperator{\F}{\mathcal{F}_\ga}
\newcommand{\hr}{\mathcal{H}_{r}(\C)}
\newcommand{\dhr}{\mathcal{H^\prime}_{\!r}(\C)}
\newcommand{\ear}{\mathrm{Exp}_{r,a}(\C)}
\newcommand{\e}{\mathrm{Exp}_r(\C)}
\newtheorem{theorem}{Theorem}[section]
\newtheorem{lemma}[theorem]{Lemma}
\newtheorem{proposition}[theorem]{Proposition}
\newtheorem{corollary}[theorem]{Corollary}
\theoremstyle{definition}
\newtheorem{definition}[theorem]{Definition}
\theoremstyle{definition}
\newtheorem{remark}[theorem]{Remark}
\begin{document}
\title[\hfilneg { }\hfil]{On harmonic analysis associated with the Hyper-Bessel operator on the complex plane }

\author{M. S. BEN HAMMOUDA }
\address{Med Saber Ben Hammouda \newline
Department of Mathematics, Faculty of Applied Sciences, Umm-Al-Qura University, AlAbdiya
P.O. Box 673, Makkah 21599, Saudi Arabia.}
\email{Mshamouda@uqu.edu.sa}

\author{L. BENNASR}
\address{Lassad Bennasr \ \newline Institut Préparatoire aux Etudes d'Ingénieur d'Elmanar, Campus
Universitaire D\'el Menzah, 1060 Tunis, Tunisie}
\email{lassad.bennasr@yahoo.fr}

\author{Ahmed Fitouhi}
\address{Ahmed Fitouhi \newline
 D\'epartement de Mat\'ematiques,  Facult\'e Des Sciences de Tunis, Campus
Universitaire D\'el Menzah, Tunis 1060, Tunisie  }
\email{Ahmed.Fitouhi@fst.rnu.tn}

\subjclass[2010]{44A05; 44A35; 46F12; 47A16}
\keywords{Hyper-Bessel operator, Convolution, Generalized Fourier transform, Hypercyclic and chaotic operator, Entire functions. \hfill\break}
\begin{abstract} We investigate the harmonic analysis associated with the  hyper-Bessel operator on $\mathbb{C}$, and we prove the chaotic character of the related convolution operators.
\end{abstract}
\maketitle
\section{Introduction}\label{intro} In connection with the generalization of the classical Bessel functions theory, M. I. Klyuchantsev~\cite{M.I.Klyu} introduced the hyper-Bessel operator
\begin{equation}\label{eq : Br1}
  B_{r}:= \frac{1}{z^{r-1}}\prod_{i=1}^{r-1}\big( z\frac{d}{dz}+({ r \gamma_{i} + 1})\big)\frac{d}{dz},
\end{equation}
where $r$ is an integer such that $r\geq2$, and $\ga=(\ga_1,\dots,\ga_{r-1})$ is a vector index having $r-1$ reals components. The operator $B_r$ contains as particular cases:
\begin{itemize}
\item[$\bullet$]The operator $\dfrac{d^r}{dz^r}$ when $\gamma_k=-1+\dfrac{k}{r},\quad k\in\{1,\dots,r-1\}$;
\item[$\bullet$] The classical Bessel operator of the second order
\begin{equation}\label{b2}
  B_{2}:= \frac{d^2}{dz^2} +\frac{(2\gamma  + 1)}{z}\frac{d}{dz},\qquad \gamma\geq-\frac{1}{2};
\end{equation}
\item[$\bullet$] The operator $B_{3} $ studied in \cite{fmren} and \cite{FITsab1}:
\begin{equation}
B_{3}:=\frac{d^3}{dz^3} + \frac{(3\,\nu)}{z}\,\frac{d^2}{dz^2}-\frac{(3\,\nu)}{z^2}\,\frac{d}{dz},\qquad\gamma_1 = -2/3,\,\gamma_2=\nu-1/3.
 \label{equ1}
\end{equation}
\end{itemize}
In \cite{FIT2} the authors studied the polynomial expansion for the solutions of the heat equation associated with the operator $B_r$, and very recently, in \cite{Dunkl-Bouz} the operator $B_r$ found its interpretation in the theory of special functions associated with complex reflection groups. A more general version of the hyper-Bessel operator is intensively studied by I. Dimovsky and V. Kyriakova in connexion with fractional calulus and operational calculi (see \cite{kiryakova1} and references therein). However, harmonic analysis associated the the hyper-Bessel operator is not yet developed.

In this paper, we are concerned with establishing some elements of harmonic analysis associated with the operator $B_r$ in the complex plane. Namely, we introduce the generalized Fourier transform and prove a generalized Paley-Wiener theorem.  Furthermore, we prove the chaotic character of the related convolution operators on some space of entire functions.

We recall that a continuous linear operator $T$ from a Fréchet space $X$ into itself is said chaotic in the sense of Devaney if
\begin{enumerate}\letters
\item $T$ is hypercyclic. That is, there exists $x\in X$ (that is called hypercyclic vector of $T$) such that its orbit $\left\{x,\,Tx,\,T^2x,\dots\right\}$ is a dense subset of $X$,
\item the set $Per(T)=\{x\in X;\, T^nx=x,\quad \text{for some integer $n$}\}$ of periodic points of $T$ is dense in $X$.
\end{enumerate}
In 1991, G. Godefroy and J. Shapiro \cite{gode} generalized classical works of Birkhoff~\cite{birk} and MacLane~\cite{macl}
showing respectively that the operators of translation and differentiation on $\mathcal{H}(\C)$, the space of all entire functions on $\C$, are hypercyclic, to show that every convolution operator on the space $\mathcal{H}(\C^N)$
that is not a scalar multiple of the identity is chaotic. Since then, the chaotic character of convolution operators have been the object of extensive study.
See for instance \cites{aron,Betancor1,Betancor2,Betancor3,Bonet,kim} for some recent works and the monographs \cite{bay} and \cite{linchao} for a general account of the theory.

This paper is organized as follows:
 \textrm{Section}~\ref{s2} is devoted to some notations and backgrounds including the Bessel function of vector index and the space $\hr$ of all $r$-even entire functions on $\C$. In Section~\ref{s3} we introduce the generalized Fourier transform and then prove a generalized
 Paley-Wiener theorem. In Section~\ref{s4} we study the generalized translation and generalized convolution on the space $\hr$ and its dual space, we also deals with the surjectivity of convolution operators. In Section~\ref{s5} we establish several characterizations of the continuous linear mappings from $\hr$ into itself that commute with the generalized translations, as a consequence, we prove the analogue of the results of Godefroy and Shapiro for the operator $B_r$.
\section{Preliminaries}\label{s2}
Throughout this paper we denote by $\N$ the nonnegative integers $\{0,1,\cdots\}$. We fix $r\in\N$ such that $r\geq 2$ and a vector index $\ga=(\ga_1,\dots,\ga_{r-1})$ having $(r-1)$ real components satisfying
\begin{equation*}\label{cond : gamma}
\gamma_k \ge - 1 + \frac{k}{r},\qquad\text{for all $ k \in\{1,...,r - 1\}$}.
\end{equation*}
The Bessel operator of $r$-order given by \eqref{eq : Br1} can also be written in the form
\begin{equation}\label{def : br}
B_r=\frac{d^r}{dz^r}+\frac{a_1}{z}\frac{d^{r-1}}{dz^{r-1}}+\ldots+\frac{a_{r-1}}{z^{r-1}}\frac{d}{dz},
\end{equation}
where
\begin{equation}\label{eq : ak}
a_{r - k} = \frac{1}{(k - 1)!}\sum\limits_{j = 1}^k {( - 1)^{k - j}
\binom{j - 1}{k - 1} } \prod\limits_{i = 1}^{r - 1} {(r\gamma_i +j)},
\end{equation}
for every $k\in\{1,\dots, r-1\}$.

Let $w=e^{{2i\pi}/{r}}$ be the root of the unit. A function $u\colon\C\to\C$ is called $r$-even if, $u(w^k z)=u(z)$ for all $ k\in\{1,\ldots r-1\}$.
We denote by $\hr$ the space of all $r$-even entire functions on $\C$ endowed with the topology of the uniform convergence on compact subsets of $\C$.
Recall that this topology is generated by the semi-norms
\begin{equation}
\|u\|_R:=\sup_{|z|\leq R}|u(z)|, \qquad u\in\hr,\,\, R>0.
\end{equation}
Thus, $\hr$ is a Fréchet space.

If  $u\in\hr$, then using integration by parts and taking into account that $u^\prime(0)=\dots=u^{(r-1)}(0)=~0$,  we conclude from \eqref{def : br} that
\begin{equation}\label{eq: Br integ}
B_ru(z)=u^{(r)}(z)+\sum_{k=1}^{r-1}\frac{a_k}{(k-1)!}\int_0^1(1-t)^{k-1}u^{(r)}(tz)dt,
\end{equation}
for all $z\neq0$. Hence, by the analyticity theorem, we can extend the function $B_ru$ to the whole complex plane, so that $B_ru\in\hr$.

The normalized Bessel function of the vector index $\gamma$ is the function $j_\gamma\colon\C\to\C$ given by
\begin{equation}\label{serie : j}
{j}_{\gamma}(z):= \sum_{n=0}^{\infty}\frac{(-1)^{n}z^{rn}}{\alpha_{rn}(\gamma)},\qquad z\in\C,
\end{equation}
where
\begin{equation}\label{const : alpha}
{\alpha_{rn}(\gamma)}= (r)^{rn}\,n! \,\prod_{i=1}^{r-1}
\frac{\Gamma(\gamma_{i} + n+1)}{\Gamma(\gamma_{i} +1)},\qquad n\in\N.
\end{equation}
See~\cites{FIT2,M.I.Klyu} and \cite{kiryakova1}*{Chap~3\&4} for more details.

For $\lambda\in\C$, define ${j}_{\gamma}(\lambda\cdot)\colon\C\to\C$ to be the function given by
\begin{equation}\label{eq : jlambda}
[j_\gamma(\lambda\cdot)](z)={j}_{\gamma}(\lambda z),\qquad\text{for all $z\in\C$}.
\end{equation}
\begin{proposition}\label{pr : jgamma egenfunction}(see \cite{FIT2})For every $\lambda\in\C$, the function $j_\gamma(\lambda\cdot)$ is the unique solution in $\hr$ of the system
\begin{equation}
\begin{cases}
B_{r}u(z)&=-\lambda^{r} u(z);\\
u(0)&=1;\\
D_{z}^{k}u(0)&=0, \text{ for all $k\in\{1,\dots,r-1\}$}.
\end{cases} \label{equ2q}
\end{equation}
\end{proposition}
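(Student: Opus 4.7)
The plan is to exploit the $r$-even structure to reduce the problem to an elementary recursion on Taylor coefficients. First I would observe that any $u\in\hr$ admits an expansion $u(z)=\sum_{n=0}^{\infty}c_{n}z^{rn}$: writing $u(z)=\sum_{k\geq 0}b_{k}z^{k}$ and imposing $u(wz)=u(z)$ forces $w^{k}=1$, hence $b_{k}=0$ whenever $r\nmid k$. This automatically encodes the derivative conditions $D_{z}^{k}u(0)=0$ for $k\in\{1,\dots,r-1\}$, while the normalization $u(0)=1$ simply reads $c_{0}=1$.

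The heart of the argument is then the explicit action of $B_{r}$ on each monomial $z^{rn}$. Using the factored form \eqref{eq : Br1}, I would apply $d/dz$ followed by the operators $(zd/dz+r\gamma_{i}+1)$, each of which acts diagonally on powers of $z$. A short computation (using $(zd/dz)(z^{rn-1})=(rn-1)z^{rn-1}$) yields
\[
B_{r}(z^{rn}) = r^{r}\,n\prod_{i=1}^{r-1}(n+\gamma_{i})\, z^{r(n-1)} = \frac{\alpha_{rn}(\gamma)}{\alpha_{r(n-1)}(\gamma)}\,z^{r(n-1)},
\]
the second equality being a direct simplification of the ratio using \eqref{const : alpha} together with $\Gamma(\gamma_{i}+n+1)=(\gamma_{i}+n)\Gamma(\gamma_{i}+n)$.

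Substituting the expansion of $u$ into $B_{r}u=-\lambda^{r}u$ and matching coefficients of $z^{rm}$ then gives the first-order recurrence $c_{m+1}\,\alpha_{r(m+1)}(\gamma)=-\lambda^{r}\,c_{m}\,\alpha_{rm}(\gamma)$, which combined with $c_{0}=1$ forces $c_{n}=(-1)^{n}\lambda^{rn}/\alpha_{rn}(\gamma)$. Hence $u$ must coincide with the series \eqref{serie : j} defining $j_{\gamma}(\lambda\cdot)$, which proves uniqueness. For existence, the same coefficient formula, read in reverse, shows that the function $j_{\gamma}(\lambda\cdot)$ does satisfy the system; membership in $\hr$ follows from the factorial growth of $\alpha_{rn}(\gamma)$, which makes the series entire and manifestly $r$-even.

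The only potential obstacle is the legitimacy of applying $B_{r}$ term-by-term to the power series, but this is automatic from the uniform convergence on compact sets of the series together with its first $r$ derivatives, again guaranteed by the growth of $\alpha_{rn}(\gamma)$. Thus the whole argument is essentially a bookkeeping exercise once the key identity for $B_{r}(z^{rn})$ has been established.
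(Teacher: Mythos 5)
Your proof is correct. Note that the paper itself offers no argument for this proposition --- it is quoted from \cite{FIT2} --- so there is no internal proof to compare against; your route (the diagonal action
$B_r(z^{rn}) = r^r\, n \prod_{i=1}^{r-1}(n+\gamma_i)\, z^{r(n-1)} = \frac{\alpha_{rn}(\gamma)}{\alpha_{r(n-1)}(\gamma)}\, z^{r(n-1)}$,
followed by the first-order coefficient recursion) is the standard one and checks out: the ratio of the $\alpha$'s simplifies exactly as you claim via $\Gamma(\gamma_i+n+1)=(\gamma_i+n)\Gamma(\gamma_i+n)$, and term-by-term application of $B_r$ is justified for $z\neq 0$ by uniform convergence of the differentiated series on compact sets, with the value at $0$ recovered by analytic continuation as in the discussion following \eqref{eq: Br integ}. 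The only step worth making explicit is that solving the recurrence for $c_{m+1}$ requires $\alpha_{r(m+1)}(\gamma)\neq 0$, equivalently $\gamma_i+m+1\neq 0$ for all $i$ and all $m\geq 0$; this is guaranteed by the standing hypothesis $\gamma_k\geq -1+\frac{k}{r}>-1$, which makes each factor $\gamma_i+m+1$ strictly positive.
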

By induction on the integer $n$ we can see that $\alpha_{rn}(\gamma) \geq (rn)!$ for all $n\in\N$. Hence, by \eqref{serie : j}, the function $j_\gamma$ satisfies the inequality
\begin{equation}\label{eq : est j}
\big|{{j}}_{\gamma}(z )\big|\leq G_\ga(|z|)\leq \, e^{|z| },\qquad\text{for all $z\in\C$},
\end{equation}
where
\begin{equation}\label{def : G}
G_{\ga}(z)=j_{\ga}(e^{i\pi/r}z)=\sum_{n=0}^{+\infty}\frac{z^{rn}}{\alpha_{rn}(\ga)}.
\end{equation}

\section{The generalized Fourier transform}\label{s3}
Let $\dhr$ denote the strong dual space of the space $\hr$ and let $\e$ denote the space of all $r$-even entire functions of exponential type. That is the space of all\ $u\in\hr$ for which
there exists $a>0$ such that
\begin{equation}\label{eq : norm pa}
P_{a}(u) = \sup_{z\in \C}|u(z)| \,e^{-a|z|}<\infty.
\end{equation}
For $a>0$, let $\ear$ denote the Banach space of all $u\in\e$ satisfying \eqref{eq : norm pa} provided with the norm $P_a$. Thus,
\begin{equation*}
\e=\bigcup_{a>0}\ear.
\end{equation*}
We provide $\e$ with the natural locally convex inductive limit topology.
\begin{lemma}\label{pr : est bn} Let $u(z)=\sum_{n=0}^{+\infty}\frac{b_n}{\alpha_{rn}(\ga)}z^{rn}\in\hr$. Then $u\in \e$ if and only if, there exist $a$, $C>0$ such that $|b_n|\leq C a^{rn}$, for all\ $n\in\N$.
\end{lemma}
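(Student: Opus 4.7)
The plan is to prove the two implications separately. For the sufficiency, assume $|b_n|\le C\,a^{rn}$ for every $n\in\N$. A direct term-by-term majorization of the Taylor expansion of $u$, combined with the series \eqref{def : G} defining $G_\ga$ and the estimate \eqref{eq : est j}, gives
\[
|u(z)|\le\sum_{n=0}^{\infty}\frac{|b_n|}{\alpha_{rn}(\ga)}\,|z|^{rn}\le C\,G_\ga(a|z|)\le C\,e^{a|z|},\qquad z\in\C,
\]
so that $P_a(u)\le C$, and therefore $u\in\ear\subset\e$.

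For the converse, assume $u\in\e$, so that there exist $a',M>0$ with $|u(z)|\le M e^{a'|z|}$ on $\C$. Since $b_n/\alpha_{rn}(\ga)$ is the $(rn)$-th Taylor coefficient of $u$ at $0$, the Cauchy integral formula on the circle $|z|=R$ yields
\[
\Bigl|\tfrac{b_n}{\alpha_{rn}(\ga)}\Bigr|\le \frac{M\,e^{a'R}}{R^{rn}}\qquad\text{for every }R>0,
\]
and optimizing the right-hand side at $R=rn/a'$ produces the estimate
\[
|b_n|\le M\,\alpha_{rn}(\ga)\,\Bigl(\frac{a'e}{rn}\Bigr)^{rn}.
\]
A standard application of Stirling's formula to $n!$ and to each $\Gamma(\ga_i+n+1)$ appearing in \eqref{const : alpha} produces the asymptotic $\alpha_{rn}(\ga)\sim K_\ga\,(rn)^{rn}\,e^{-rn}\,n^{\sigma}$, with $\sigma=\sum_{i=1}^{r-1}\ga_i+r/2$ and $K_\ga$ a positive constant depending only on $\ga$. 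Substituting, the factors $(rn)^{rn}$ and $e^{rn}$ cancel exactly against those of $(a'e/(rn))^{rn}$, leaving $|b_n|\le M'_\ga\,(a')^{rn}\,n^{\sigma}$. For any fixed $a>a'$ the polynomial factor $n^{\sigma}$ is absorbed into $(a/a')^{rn}$ for large $n$, which gives the required bound $|b_n|\le C\,a^{rn}$ for all $n\in\N$ (after enlarging $C$ to cover finitely many initial indices).

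The main obstacle is precisely this asymptotic analysis of $\alpha_{rn}(\ga)$: the one-sided lower bound $\alpha_{rn}(\ga)\ge(rn)!$ that sufficed on the easy side is no longer enough here, and one really needs a matching polynomial two-sided control so that the Cauchy optimization collapses cleanly, leaving only a polynomial (rather than exponential) factor in $n$ to be absorbed by an arbitrarily small enlargement of the exponent. Once this Stirling computation is settled, the rest of the argument is bookkeeping.
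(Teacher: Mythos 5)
Your proof is correct, and it takes a more self-contained route than the paper. The paper quotes the classical coefficient characterization of exponential type from Rubel--Colliander (namely that $u\in\e$ if and only if $\limsup_{n\to\infty}\bigl|(rn)!\,b_n/\alpha_{rn}(\ga)\bigr|^{1/(rn)}<\infty$) and then only needs Stirling once, to verify that $(rn)!/\alpha_{rn}(\ga)$ grows and decays at most polynomially, so that the condition collapses to $\limsup_n|b_n|^{1/(rn)}<\infty$. You instead reprove the relevant half of that cited result inline: the sufficiency by direct majorization of the series against $G_\ga(a|z|)\le e^{a|z|}$ (which only needs the one-sided bound $\alpha_{rn}(\ga)\ge(rn)!$ already noted in the paper), and the necessity by the Cauchy estimate on $|z|=R$ optimized at $R=rn/a'$, followed by the two-sided Stirling asymptotic $\alpha_{rn}(\ga)\sim K_\ga(rn)^{rn}e^{-rn}n^{\sigma}$; this asymptotic is exactly equivalent to the paper's limit \eqref{eq : limsup}, so the analytic content is the same, just distributed differently. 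What your version buys is independence from the external reference and an explicit exhibition of where the loss $a'\mapsto a>a'$ occurs (absorbing the polynomial factor $n^{\sigma}$); what the paper's version buys is brevity and a cleaner symmetric treatment of both implications through a single $\limsup$ criterion. Only minor bookkeeping is left implicit in your write-up (the index $n=0$, where $R=rn/a'$ degenerates, and the enlargement of $C$ over finitely many initial indices, which you do mention), so there is no gap.
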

\begin{proof}From \cite{Rubel}*{Page 43} we know that $u\in\e$ if and only if
\begin{equation*}
\limsup_{n\to\infty}\left|\frac{(rn)!b_n}{\alpha_{rn}(\ga)}\right|^{\frac{1}{rn}}<\infty.
\end{equation*}
Using Stirling formula, we can check that
\begin{equation}\label{eq : limsup}
\limsup_{n\to\infty}\left|\frac{(nr)!}{\alpha_{rn}(\ga)}\right|^{\frac{1}{n}}=1.
\end{equation}
This implies that $u\in\e$ if and only if $\limsup_{n\to \infty}\left|b_n\right|^{\frac{1}{rn}}<\infty$, and the conclusion of the lemma follows.
\end{proof}
\begin{lemma}
Let $v(z)=\sum_{n=0}^{+\infty}\frac{b_n}{\alpha_{rn}(\ga)}z^{rn}\in\e$. For $u(z)=\sum_{n=0}^{+\infty}a_nz^{rn}\in\hr$, let
\begin{equation}\label{eq : Tu in dh}
T_v(u)=\sum_{n=0}^{+\infty}a_nb_n.
\end{equation}
Then, the series in \eqref{eq : Tu in dh} converges absolutely and defines $T_v\colon u\to T_v(u)$ as an element of $\dhr$.
\end{lemma}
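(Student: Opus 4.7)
The plan is to use Lemma~\ref{pr : est bn} to obtain a geometric bound on the coefficients $b_n$, combine it with Cauchy estimates on the Taylor coefficients of $u$, and read off both absolute convergence and continuity from a single comparison with a convergent geometric series.

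First, I would apply Lemma~\ref{pr : est bn} to $v\in\e$ to produce constants $a,C>0$ such that $|b_n|\leq Ca^{rn}$ for every $n\in\N$. Next, since $u(z)=\sum_{n\geq 0}a_nz^{rn}$ is entire, Cauchy's inequalities on the circle of radius $R$ (noting that only powers of $z$ that are multiples of $r$ appear in the expansion) give
\begin{equation*}
|a_n|\leq \frac{\|u\|_R}{R^{rn}},\qquad n\in\N,\ R>0.
\end{equation*}
Now I would choose $R>a$ and set $q:=(a/R)^r<1$. Multiplying the two estimates produces
\begin{equation*}
|a_nb_n|\leq C\,\|u\|_R\,q^n,
\end{equation*}
from which the absolute convergence of the series in \eqref{eq : Tu in dh} is immediate, together with the uniform estimate
\begin{equation*}
|T_v(u)|\leq \frac{C}{1-q}\,\|u\|_R.
\end{equation*}

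Linearity of $u\mapsto T_v(u)$ is clear from the definition \eqref{eq : Tu in dh}, and the displayed inequality shows that $T_v$ is continuous with respect to a single seminorm of the Fr\'echet topology on $\hr$; hence $T_v\in\dhr$.

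I do not expect any serious obstacle. The only meaningful choice is that of the radius $R$ relative to the exponential type $a$ of $v$, and the conceptual ingredient is the matching between the prescribed exponential growth of the coefficients of an element of $\e$ (quantified by Lemma~\ref{pr : est bn}) and the arbitrarily fast decay that Cauchy estimates grant to the coefficients of an arbitrary entire function, at the cost of enlarging $R$. This matching is precisely what makes the pairing \eqref{eq : Tu in dh} well defined and continuous, and it also explains why the construction would fail if $v$ were allowed to grow faster than exponentially.
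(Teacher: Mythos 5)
Your proof is correct and follows essentially the same route as the paper: Lemma~\ref{pr : est bn} for the geometric bound on the $b_n$, Cauchy estimates for the decay of the $a_n$, and a comparison with a geometric series yielding both absolute convergence and the continuity estimate. The paper merely specializes your radius to $R=2a$, obtaining $|T_v(u)|\leq 2C\|u\|_{2a}$, which is your bound with $q=2^{-r}$.
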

\begin{proof}By Lemma~\eqref{pr : est bn} there exist $C$, $a>0$ such that $|b_n|\leq C a^{rn}$ for all $n\in\N$. On the other hand, by the Cauchy estimate, for $u(z)=\sum_{n=0}^{+\infty}a_nz^{rn}\in\hr$, we have $|a_n|\leq (2a)^{-rn}\|u\|_{2a}$, for all $n\in\N$. This implies that the series in \eqref{eq : Tu in dh} converges absolutely and $|T_v(u)|\leq 2C\|u\|_{2a}$ and hence, $T_v\in\dhr$.
\end{proof}
\begin{definition} For $T\in\dhr$, we define the generalized Fourier transform of $T$ to be the function $\F(T)\colon\C\to\C$ given by
\begin{equation}
\F(T)(z)=\langle T(w), j_\ga(wz)\rangle,\qquad z\in\C,
\end{equation}
where $j_\ga$ is given by \eqref{serie : j}.
\end{definition}

\begin{theorem}[Palely-Wiener Theorem]\label{th : pw} The transform $\F$ is a topological isomorphism from $\dhr$ onto $\e$.
\end{theorem}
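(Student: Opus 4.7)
The plan is to reduce the claim to a bijection between coefficient sequences and then verify continuity in both directions using Cauchy estimates together with the asymptotic~\eqref{eq : limsup}. First I would expand $j_\ga(wz)$ in a Taylor series in the variable $w$ on which $T$ acts. Since the partial sums converge to $j_\ga(\cdot z)$ in $\hr$ for each fixed $z$, continuity of $T$ yields
\begin{equation*}
\F(T)(z)=\sum_{n=0}^{+\infty}\frac{(-1)^n\,c_n}{\alpha_{rn}(\ga)}\,z^{rn},\qquad c_n:=\langle T,w^{rn}\rangle.
\end{equation*}
The estimate $|\langle T,u\rangle|\leq M\|u\|_R$ guaranteed by continuity of $T$ gives $|c_n|\leq MR^{rn}$, so Lemma~\ref{pr : est bn} places $\F(T)$ in $\e$ (in fact in $\mathrm{Exp}_{r,R}(\C)$ with $P_R(\F(T))\leq M$). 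Injectivity is immediate since $\F(T)\equiv0$ forces every $c_n$ to vanish, and density of the $r$-even polynomials in $\hr$ (via truncation of Taylor series) then forces $T=0$. For surjectivity, given $v(z)=\sum b_n z^{rn}/\alpha_{rn}(\ga)\in\e$ with $|b_n|\leq Ca^{rn}$, I would set $\langle T,u\rangle:=\sum(-1)^n a_n b_n$ for $u(w)=\sum a_n w^{rn}$; the Cauchy bound $|a_n|\leq(2a)^{-rn}\|u\|_{2a}$ makes this absolutely convergent with $T\in\dhr$, and substituting $u=j_\ga(\cdot z)$ recovers $\F(T)=v$.

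For continuity of $\F^{-1}:\e\to\dhr$ I would invoke the universal property of the inductive limit and check continuity of the restriction to each Banach space $\ear$. A Cauchy estimate on the circle of radius $\rho=rn/a$ combined with~\eqref{eq : limsup} (which gives $\alpha_{rn}(\ga)\sim(rn)!$) yields, for any $a'>a$, $|b_n|\leq C(a')^{rn}P_a(v)$. Bounding $|a_n|$ by $(2a')^{-rn}\|u\|_{2a'}$ then produces $|\langle \F^{-1}(v),u\rangle|\leq C'\|u\|_{2a'}\,P_a(v)$, which is exactly the required continuity from $\ear$ into $\dhr$.

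For continuity of $\F:\dhr\to\e$, suppose $T_\beta\to T$ strongly. Banach-Steinhaus supplies uniform constants $R,M$ with $|\langle T_\beta-T,u\rangle|\leq M\|u\|_R$, so the images all sit in $\mathrm{Exp}_{r,R}(\C)$ with $P_R$-norm bounded by $M$. Fixing $R'>R$, I would split the supremum defining $P_{R'}(\F(T_\beta)-\F(T))$ at $|z|=K$: on $|z|\leq K$ the set $\{j_\ga(\cdot z):|z|\leq K\}$ is bounded in $\hr$, so strong convergence yields uniform smallness; on $|z|>K$ the uniform $P_R$-bound gives $\leq 2M\,e^{-(R'-R)K}$. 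Choosing first $K$ and then $\beta$ large enough yields $P_{R'}$-convergence, hence convergence in $\e$. The principal obstacle is precisely this matching of the two non-metrizable topologies (strong dual on one side, countable inductive limit on the other); the mechanism that makes both checks work is that Cauchy estimates together with~\eqref{eq : limsup} translate growth of $v\in\e$ into growth of its Taylor coefficients and vice-versa, yielding the dictionary $\dhr\longleftrightarrow\{(c_n):|c_n|\leq MR^{rn}\text{ for some }M,R\}\longleftrightarrow\e$ on which the topological identification rests.
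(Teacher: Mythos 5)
Your treatment of the algebraic core of the theorem --- the termwise expansion of $\F(T)$, the coefficient bound $|\langle T(w),w^{rn}\rangle|\leq Ca^{rn}$ feeding into Lemma~\ref{pr : est bn}, the injectivity argument, and the explicit construction of the preimage with the compensating sign $(-1)^n$ --- coincides with the paper's proof. Where you genuinely diverge is in the two continuity checks. For $\F^{-1}$ the paper simply invokes the open mapping theorem, whereas you restrict to each Banach step $\ear$ and derive the quantitative bound $|\langle\F^{-1}(v),u\rangle|\leq C'\|u\|_{2a'}\,P_a(v)$ from a Cauchy estimate on the circle of radius $rn/a$ combined with~\eqref{eq : limsup}. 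That computation is correct, it is compatible with the universal property of the inductive limit, and it is arguably preferable: it is elementary and produces explicit constants instead of appealing to abstract open-mapping machinery for non-metrizable spaces.

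The continuity of $\F$ itself is where your argument has a genuine gap. You take a convergent net $T_\beta\to T$ in $\dhr$ and apply Banach--Steinhaus to extract uniform constants $M$, $R$; but a convergent \emph{net} in a topological vector space need not be a bounded set, so the uniform boundedness principle gives you nothing about the family $\{T_\beta-T\}$. If you restrict to sequences the family is bounded, hence equicontinuous because $\hr$ is barreled, and your splitting of the supremum at $|z|=K$ then correctly proves sequential continuity; but sequential continuity of a linear map between non-metrizable spaces does not by itself imply continuity. The missing ingredient is exactly the one the paper states at the outset of this step: $\dhr$ is bornological, so it suffices to show that $\F$ carries bounded subsets of $\dhr$ to bounded subsets of $\e$ (equivalently, boundedness or sequential continuity already implies continuity). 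Once you add that remark, your estimate closes the argument --- indeed the paper's own proof is precisely your ``$|z|>K$'' estimate applied to a bounded set $B$, yielding $P_a(\F(T))\leq C$ for all $T\in B$. Without it, the passage from ``convergent sequences go to convergent sequences'' to ``$\F$ is continuous'' is unjustified.
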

\begin{proof}First, we prove that if $T \in \dhr$, then $\F(T)\in\e$. Let $T\in\dhr$. Since the series in \eqref{serie : j} converges in $\hr$, it follows that
\begin{equation}\label{eq : F serie}
\F(T)(z) = \langle T(w),\, {{j}}_{\gamma}(w z) \rangle=\sum_{n=0}^{+\infty} (-1)^{n} \frac{\langle T(w),w^{nr}\rangle}{\alpha_{rn}(\gamma)} z^{nr},
\end{equation}
for all $z\in\C$. Hence, $\F(T)\in\hr$. On the other hand, the continuity of $T$ infers that there exist $a$, $C>0$ such that
$$|\langle T, u\rangle|\leq C\|u\|_a,\qquad\text{ for all $u\in\hr$},$$
which implies that $|\langle T(w), w^{nr}\rangle|\leq C a^{nr}$ for all $n\in\N$. It follows from \eqref{eq : F serie} and Proposition~\ref{pr : est bn} that $\F(T)\in\e$. %

Next, we prove that $\F$ is a continuous mapping from $\dhr$ into $\e$. Since $\dhr$ is a bornological space, it is sufficient to show that $\F(B)$ is a bounded set in $\e$ whenever $B$ is a bounded set of $\dhr$. Assume that $B$ is a bounded subset of $\dhr$. Since $\dhr$ is barreled, it follows from the uniform boundedness principle that there exist $C$, $a> 0$ such that $|\langle T, u\rangle|\leq C\|u\|_a$ for all $T\in B$ and all $u\in\hr$. Using \eqref{eq : est j}, we conclude that if $T\in B$, then $|\F(T)(z)|\leq C e^{a|z|}$ for all $z\in\C$, and hence $P_a(\F(T))\leq C$. This implies that $\F(B)$ is bounded in $\ear$ and hence bounded in $\e$.

To see that the transform \;$\F$\;is one to one, assume that\;$T\in \hr$\;is such that\;$\F(T)(z)=0$\;for all $z\in \C$. Then, from \eqref{eq : F serie} %
we conclude that $\langle T(w),w^{rn}\rangle =0$\; for all \;$n\in\N$. So, if $u(z)=\sum_{n=0}^{+\infty}b_nz^{rn}\in\hr$, then
$\langle T,u\rangle =\sum_{n=0}^{+\infty}b_n\langle T, w^{rn}\rangle=0$. Thus, $\F$ is one to one.

For the surjectivity of\;$\F$, let $v(z)=\sum_{n=0}^{+\infty}\frac{b_n}{\alpha_{rn}(\gamma )}z^{rn}\in\e$. Define $\tilde{v}(z)=\sum_{n=0}^{+\infty}\frac{(-1)^nb_n}{\alpha_{rn}(\gamma )}z^{rn}$. Then $\F(T_{\tilde{v}})=v$, where $T_{\tilde{v}}$ is given by~\eqref{eq : Tu in dh}.

Finally, the continuity of $(\F)^{-1}$ follows from the open mapping theorem \cite{vogt}*{Theorem~24.30}.
\end{proof}
\begin{corollary}\label{cor : dens}
Suppose $\Lambda$ is any subset of\ $\C$ with a limit point in $\C$, and let $J(\Lambda)$ be the linear span of the functions %
$j_{\gamma}(\lambda\cdot)$  with $\lambda\in\Lambda$. Then $J(\Lambda)$ is dense in $\hr$.
\end{corollary}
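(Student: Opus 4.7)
The plan is to prove density by the standard Hahn--Banach duality: $J(\Lambda)$ is dense in the locally convex space $\hr$ if and only if every $T\in\dhr$ that annihilates $J(\Lambda)$ is the zero functional. So I would take an arbitrary $T\in\dhr$ with $\langle T, j_\gamma(\lambda\cdot)\rangle = 0$ for every $\lambda\in\Lambda$, and aim to conclude $T=0$.

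The key observation is that the hypothesis translates directly into a statement about the generalized Fourier transform: by the definition of $\F$,
\begin{equation*}
\F(T)(\lambda) = \langle T(w),\, j_\gamma(\lambda w)\rangle = 0 \qquad\text{for every $\lambda\in\Lambda$.}
\end{equation*}
By Theorem~\ref{th : pw}, $\F(T)\in\e\subset\hr$, so in particular $\F(T)$ is an entire function on $\C$. Since $\Lambda$ has a limit point in $\C$ and $\F(T)$ vanishes on $\Lambda$, the classical identity theorem for holomorphic functions forces $\F(T)\equiv 0$. The injectivity of $\F$ (again from Theorem~\ref{th : pw}) then yields $T=0$, which is exactly what we need.

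I do not expect any real obstacle here; the proof is essentially a one-line consequence of the Paley--Wiener theorem together with Hahn--Banach and the identity principle. The only point that merits a brief justification is the duality step, namely that continuous linear functionals on $\hr$ separate closed proper subspaces, which holds because $\hr$ is a Fr\'echet (hence locally convex Hausdorff) space, so the Hahn--Banach theorem applies in its geometric form. Everything else is routine.
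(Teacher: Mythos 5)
Your proof is correct and follows exactly the same route as the paper: dualize via Hahn--Banach, observe that the hypothesis says $\F(T)$ vanishes on $\Lambda$, invoke the identity theorem for the entire function $\F(T)$, and conclude $T=0$ from the injectivity of $\F$ given by Theorem~\ref{th : pw}. No differences worth noting.
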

\begin{proof}
Suppose that $T\in\dhr$ is such that $\langle T(w), j_\gamma(\lambda w)\rangle=0$, for all $\lambda\in\Lambda$. Then $\F(T)(\lambda)=0$ for all $\lambda\in\Lambda$.
Since, by \rm{Theorem}~\ref{th : pw}, $\F(T)$ is entire on $\C$ and $\Lambda$ has a limit point in $\C$, it follows that $\F(T)$ vanishes on $\C$. Hence,
\rm{Theorem}~\ref{th : pw} also implies that $T$ vanishes on $\hr$, so by the Hahn-Banach theorem, $J(\Lambda)$ is dense in $\hr$.
\end{proof}
\section{The generalized translation and generalized convolution}\label{s4}
In this section we study the generalized translation and generalized convolution associated with the operator $B_r$.
\subsection{The generalized translation} We begin with the following useful lemma.
\begin{lemma}\label{lem : ineq br} If $u\in\hr$ and $R>0$, then
\begin{equation}
\|B_r^nu\|_R \leq \frac{M^n(nr)!}{R^{nr}}\|u\|_{2R}\label{eq : ineq br},
\end{equation}
for all $n\in\N$, where
\begin{equation}
M=1+\sum_{k=1}^{r-1}\frac{|a_k|}{k!}.
\end{equation}
\end{lemma}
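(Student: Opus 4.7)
The plan is to prove the sharper pointwise bound
\begin{equation*}
\|B_r^n u\|_R \leq M^n \|u^{(rn)}\|_R
\end{equation*}
by induction on $n$, and then close with the classical Cauchy estimate $\|u^{(rn)}\|_R \leq (rn)!\,R^{-rn}\|u\|_{2R}$ (from integration on the circle $|w|=2R$) applied only once, at the very end. The motivation for this detour is that iterating a one-step bound of the form $\|B_r v\|_\rho \leq Mr!(\rho'-\rho)^{-r}\|v\|_{\rho'}$ on $n$ nested discs between $R$ and $2R$ only produces the factor $(r!)^n n^{rn}$, which is strictly larger than $(rn)!$ (since the multinomial coefficient $(rn)!/(r!)^n$ is at most $n^{rn}$); the Cauchy loss must therefore be incurred in a single step.

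For the base case $n=1$, I would plug the integral representation~\eqref{eq: Br integ} into the triangle inequality: because $|tz|\leq R$ whenever $|z|\leq R$ and $t\in[0,1]$, and $\int_0^1(1-t)^{k-1}\,dt=1/k$, one obtains $\|B_r u\|_R\leq M\|u^{(r)}\|_R$.

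For the induction step, I would apply the hypothesis to $v=B_r u\in\hr$, reducing matters to proving $\|(B_r u)^{(rn)}\|_R\leq M\|u^{(r(n+1))}\|_R$. Differentiating~\eqref{eq: Br integ} $rn$ times under the integral sign and using $\frac{d^{rn}}{dz^{rn}}u^{(r)}(tz)=t^{rn}u^{(r(n+1))}(tz)$ expresses $(B_r u)^{(rn)}(z)$ as $u^{(r(n+1))}(z)$ plus, for each $k\in\{1,\dots,r-1\}$, the term $\frac{a_k}{(k-1)!}\int_0^1(1-t)^{k-1}t^{rn}u^{(r(n+1))}(tz)\,dt$. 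Bounding each integrand's modulus by $\|u^{(r(n+1))}\|_R$ and evaluating the Beta integral $\int_0^1(1-t)^{k-1}t^{rn}\,dt=(k-1)!(rn)!/(rn+k)!$ leaves the coefficient $1+\sum_{k=1}^{r-1}|a_k|(rn)!/(rn+k)!$ in front of $\|u^{(r(n+1))}\|_R$.

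The decisive step, which I would flag as the main obstacle, is the elementary inequality $(rn)!/(rn+k)!=1/[(rn+1)\cdots(rn+k)]\leq 1/k!$, valid because $rn+j\geq j$ for every $j\geq 1$: it bounds the $k$-th summand by $|a_k|/k!$, so the coefficient above is at most $M$, which closes the induction with the factor $M^{n+1}$. This is the pivotal matching between the combinatorial factor $(rn)!/(rn+k)!$ coming from the Beta integral and the definition of $M$ that makes the induction close without any wasted constant; once it is recognised, combining the resulting bound $\|B_r^n u\|_R\leq M^n\|u^{(rn)}\|_R$ with the single Cauchy estimate yields the claim.
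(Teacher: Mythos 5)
Your proposal is correct and follows essentially the same route as the paper: an induction establishing $\|B_r^n u\|_R \leq M^n\|u^{(rn)}\|_R$ via the integral representation \eqref{eq: Br integ} differentiated $rn$ times, followed by a single application of the Cauchy estimate $\|u^{(rn)}\|_R\leq (rn)!R^{-rn}\|u\|_{2R}$. The only cosmetic difference is that you evaluate the Beta integral exactly and then invoke $(rn)!/(rn+k)!\leq 1/k!$, whereas the paper simply bounds $t^{rn}\leq 1$ to get $\int_0^1(1-t)^{k-1}t^{rn}\,dt\leq 1/k$ directly; both yield the factor $|a_k|/k!$ and close the induction identically.
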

\begin{proof}First, we prove by induction on $n$ that
\begin{equation}
\|B_r^nu\|_R\leq M^n\|u^{(nr)}\|_R,\qquad\text{for all $n\in\N$}.\label{eq : ineq br2}
\end{equation}
The result is trivially true for $n=0$. Assume that \eqref{eq : ineq br2} is satisfied for the integer $n$. Then,
\begin{equation}
\|(B_r)^{n+1}u\|_R=\|B_r^n(B_ru)\|_R\leq M^n\|(B_ru)^{(nr)}\|_R.
\end{equation}
Using \eqref{eq: Br integ} we conclude that
\begin{equation*}
(B_ru)^{(nr)}(z)=u^{((n+1)r)}(z)+\sum_{k=1}^{r-1}\frac{a_k}{(k-1)!}\int_0^1(1-t)^{k-1}t^{nr}u^{((n+1)r)}(tz)dt,
\end{equation*}
for all $z\in\C$. This implies that $\|(B_ru)^{(nr)}\|_R\leq M\|u^{(n+1)r}\|_R$. Hence,
$$\|(B_r)^{n+1}u\|_R\leq M^{n+1}\|u^{(n+1)r}\|_R,$$
and the proof of \eqref{eq : ineq br2} is complete.

Now, according to the Cauchy integral formula, we have
\begin{equation}\label{eq : ineq cauchy}
\left\Vert u^{(k)}\right\Vert _{R}\leq\frac{k!}{R^{k}}\left\Vert u\right\Vert _{2R},\qquad\text{for all $k\in\N$}.
\end{equation}
This together with \eqref{eq : ineq br2} yield \eqref{eq : ineq br}.
\end{proof}
\begin{corollary}\label{prop : cont br}
The operator $B_{r}$ is continuous from $\hr$ into~{itself.}
\end{corollary}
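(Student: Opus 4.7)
The plan is to deduce continuity directly from the just-proved Lemma~\ref{lem : ineq br}, specialised to $n=1$. Since $\mathcal{H}_r(\mathbb{C})$ is a Fréchet space whose topology is generated by the family of seminorms $\{\|\cdot\|_R\}_{R>0}$, to prove continuity of the linear operator $B_r\colon \hr\to \hr$ it is enough to show that for every $R>0$ there exist $R'>0$ and $C=C(R)>0$ such that
\begin{equation*}
\|B_r u\|_R \leq C\,\|u\|_{R'}\qquad\text{for all } u\in\hr.
\end{equation*}

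First I would note that $B_r$ maps $\hr$ into itself: this is already observed in the discussion around formula~\eqref{eq: Br integ}, where the integration-by-parts representation (valid because $u^{\prime}(0)=\cdots=u^{(r-1)}(0)=0$ for $u\in\hr$) combined with the analyticity theorem shows that $B_r u$ extends to an $r$-even entire function. Linearity of $B_r$ is obvious from~\eqref{def : br}.

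The core estimate is obtained by taking $n=1$ in the bound
\begin{equation*}
\|B_r^n u\|_R \leq \frac{M^n (nr)!}{R^{nr}}\,\|u\|_{2R}
\end{equation*}
of Lemma~\ref{lem : ineq br}, which yields
\begin{equation*}
\|B_r u\|_R \leq \frac{M\, r!}{R^{r}}\,\|u\|_{2R}.
\end{equation*}
This is precisely the required continuity inequality, with $R'=2R$ and $C = M\,r!/R^{r}$.

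There is essentially no obstacle here, because the hard work has already been performed in Lemma~\ref{lem : ineq br}: the inductive step combining~\eqref{eq: Br integ} with the Cauchy estimate has absorbed all of the analytic content. The corollary therefore reduces to a one-line specialisation of that lemma and a remark on the definition of the Fréchet topology on $\hr$.
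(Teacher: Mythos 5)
Your proof is correct and is exactly the paper's argument: the paper also derives the corollary by setting $n=1$ in the estimate of Lemma~\ref{lem : ineq br}, obtaining $\|B_r u\|_R \leq \frac{M\,r!}{R^{r}}\|u\|_{2R}$, which gives continuity with respect to the generating seminorms. Your additional remarks on linearity and on $B_r$ mapping $\hr$ into itself merely make explicit what the paper had already established before the lemma.
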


\begin{proof}
The result follows immediately from \eqref{eq : ineq br} by taking $n=1$.
\end{proof}
Following J. Delsarte~\cite{dels}, we define the generalized translation operator as follows
\begin{equation}\label{serie : trans}
(T^{\ga}_{z}u)(w)=\sum_{n=0}^{+\infty} \frac{w^{rn}}{\alpha_{rn}(\gamma)} B_{r}^{n}u(z),\qquad w,z\in\C,~u\in\hr.
\end{equation}
Note that by Proposition~\ref{pr : jgamma egenfunction}, we have the product formula
\begin{equation}\label{for : prod}
j_\ga(\lambda z) j_\ga(\lambda w)=T^{\ga}_{z}(j_\ga(\lambda.))(w),\qquad w,z\in\C.
\end{equation}
\begin{proposition}\label{prop : conti trans}The series in \eqref{serie : trans} converges on compact subsets of $\C$ and defines $T_z^\ga$ as a continuous linear operator from $\hr$ into itself, for evry $z\in\C$.
\end{proposition}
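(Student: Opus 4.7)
The plan is to estimate each term of the series \eqref{serie : trans} by combining Lemma~\ref{lem : ineq br} with the elementary fact that $(rn)!\le\alpha_{rn}(\ga)$, and then to sum a geometric series. A single resulting inequality will simultaneously yield convergence on compacts, $r$-evenness, and continuity in $u$.

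Fix $z\in\C$ and $W>0$, and let $u\in\hr$. The goal is to dominate $\sup_{|w|\le W}|(T_z^\ga u)(w)|$ by a constant times a single semi-norm of $u$. For any $R\ge|z|$, Lemma~\ref{lem : ineq br} gives
\begin{equation*}
|B_r^n u(z)|\le\|B_r^n u\|_R\le\frac{M^n(rn)!}{R^{rn}}\|u\|_{2R},
\end{equation*}
and using $(rn)!\le\alpha_{rn}(\ga)$ (recalled just before \eqref{eq : est j}), the $n$-th term of \eqref{serie : trans} is bounded for $|w|\le W$ by
\begin{equation*}
\frac{W^{rn}}{\alpha_{rn}(\ga)}\,|B_r^n u(z)|\le\Bigl(\frac{MW^r}{R^r}\Bigr)^{\!n}\|u\|_{2R}.
\end{equation*}
I will then choose $R=\max\bigl(|z|,\,2M^{1/r}W\bigr)$, so that $MW^r/R^r\le 2^{-r}<1$; the tail is dominated by a convergent geometric series and yields
\begin{equation*}
\|T_z^\ga u\|_W\le\frac{1}{1-2^{-r}}\,\|u\|_{2R}.
\end{equation*}

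This one inequality does all the work. The partial sums of \eqref{serie : trans} converge absolutely and uniformly on every compact disk in $w$, so $T_z^\ga u$ is entire; only powers $w^{rn}$ occur, so $T_z^\ga u\in\hr$; and the linear map $u\mapsto T_z^\ga u$ is continuous from $\hr$ into itself because each semi-norm $\|\cdot\|_W$ of the image is controlled by a single semi-norm $\|\cdot\|_{2R}$ of the source.

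The only real obstacle is the double role of the auxiliary radius $R$: it must satisfy $R\ge|z|$ so that Lemma~\ref{lem : ineq br} can be used to estimate $B_r^n u$ at the point $z$, and at the same time $R^r>MW^r$ so that the geometric series converges. Letting $R$ depend on both $|z|$ and $W$ as above resolves this tension, at the price that the continuity bound depends on $z$ — which is exactly what the proposition (continuity for each fixed $z$, not jointly in $z$ and $u$) allows.
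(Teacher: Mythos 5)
Your proof is correct and follows essentially the same route as the paper: both estimate the $n$-th term via Lemma~\ref{lem : ineq br}, compare $(rn)!$ with $\alpha_{rn}(\ga)$, and choose the auxiliary radius large enough to dominate the series by a convergent geometric one, yielding $\|T_z^\ga u\|_W\le C\|u\|_{2R}$ in a single stroke. The only cosmetic difference is that you use the crude bound $\alpha_{rn}(\ga)\ge(rn)!$ where the paper invokes $\limsup_n|(nr)!/\alpha_{rn}(\ga)|^{1/n}=1$; both suffice.
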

\begin{proof}let $R$, $R'>0$ and let $u\in\hr$. It follows from Lemma~\ref{lem : ineq br} that for all $w$, $z\in\C$ such that $|w|\leq R$ and $|z|\leq R'$ we have
\begin{equation}\label{eq : est term serie trans}
|\frac{w^{rn}}{\alpha_{rn}(\gamma)} B_{r}^{n}u(z)|\leq\frac{R^{nr}M^n(nr)!}{(R')^{nr}\alpha_{rn}(\ga)}\|u\|_{2R^\prime},\qquad n\in\N.
\end{equation}
Since
\begin{equation}
\limsup_{n\to\infty}\left|\frac{(nr)!}{\alpha_{rn}(\ga)}\right|^{\frac{1}{n}}=1,
\end{equation}
for sufficiently large $R'$, we have $(R/R^\prime)^rM<1$, and then, the series in \eqref{serie : trans} converges uniformly on the closed polydisk
 \begin{equation}
 \left\{(w,z)\in\C\times\C\mid |w|\leq R,\,|z|\leq R'\right\}.
 \end{equation}
Thus, the series converges uniformly on compact subsets of $\C\times\C$. Now, if we fix $z\in\C$ and we chose $R'$ sufficiently large so
that $|z|\leq R'$ and  $(R/R^\prime)^rM<1$, from \eqref{eq : est term serie trans} we conclude that there exists $C>0$ such that
\begin{equation}
\|T_z^\ga u\|_R\leq C\|u\|_{2R'},\qquad\text{for all $u\in\hr$}.
\end{equation}
Since $T_z^\ga$ is linear, it is continuous.
\end{proof}
\begin{remark} We define a generalized addition formula associated with the operator $B_r$, for $z,w\in\C$ and $n\in\C$, as follows
\begin{equation}
\left(z\oplus_\ga w\right)^{rn}:=\sum_{k = 0}^n \binom{\alpha_{rn}}{\alpha_{rk}} \,w^{rk}z^{r(n - k)}\label{eq : addition},
\end{equation}
where $$\dbinom{\alpha_{rn}}{\alpha_{rk}}:=\dfrac{\alpha_{rn}}{\alpha_{rk}\,\alpha_{r(n-k)}}$$ is the generalized binomial.
It is easy to check that for all $n\in\N$ and all $z,w\in\C$ we have
\begin{equation}
\left(z\oplus_\ga w\right)^{rn}=z^{rn}\,_{r}F_{r-1}\left[\begin{array}{c}-n, -(n + \gamma_{i}) \\[2pt]\gamma_{i}+1\end{array} \bigg|(-\frac{w}{z})^r\right],
\end{equation}
where $_{r}F_{r-1}$ is the hypergeometric function (see \cite{AA}). This addition in terms of hypergeometric functions is an analogue of %
the addition for Bessel functions presented by Bochner \cite{Boch55} and F. M. Cholewinski and J. A. Reneke \cite{fmren} (see also \cite{FITsab1}).
For $u(t)=\sum_{n=0}^{+\infty}b_nt^{nr}\in\hr$, define
\begin{equation}
u(z\oplus_{\ga}w)=\sum_{n=0}^{+\infty}b_n\left(z\oplus_\ga w\right)^{rn},\qquad z,w\in\C.
\end{equation}
Then, it is easy to check that $T_z^\ga u(w)=u(z\oplus_\ga w)$ for all $z,w\in\C$.
\end{remark}
In the following proposition, we give some properties of the generalized translation operators $T_z^\gamma$ for $z\in\C$.
\begin{proposition}\label{prop : prop trans} For $u\in\hr$ and  $z,w\in \C$ we have
\begin{enumerate}\romanletters
\item\label{a} $T^{\gamma}_{0}u(z)=u(z)$.
\item\label{b} $T^{\gamma}_{z}u(w)=T^{\gamma}_{w}u(z)$.
\item\label{d}$B_{r}T^{\gamma}_{z}u=T^{\gamma}_{z}B_{r}u$.
\item\label{e}$T^{\gamma}_{z} \circ T^{\gamma}_{w}u =T^{\gamma}_{w} \circ T^{\gamma}_{z}u$.
\end{enumerate}
\end{proposition}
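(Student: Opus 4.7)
My plan is to reduce everything to a single computational identity. Setting $e_n(w):=w^{rn}/\alpha_{rn}(\gamma)$, a direct calculation from \eqref{eq : Br1}, using the ratio $\alpha_{rn}(\gamma)/\alpha_{r(n-1)}(\gamma)=r^{r}\,n\prod_{i=1}^{r-1}(n+\gamma_i)$, shows that
\begin{equation*}
B_r e_0 = 0\qquad\text{and}\qquad B_r e_n = e_{n-1}\ (n\ge 1).
\end{equation*}
Consequently, for any $u(t)=\sum_n b_n e_n(t)\in\hr$ one has $B_r^k u(t)=\sum_{n\ge k} b_n e_{n-k}(t)$, and in particular $B_r^k u(0)=b_k$.

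For \itemref{a}, I substitute $B_r^n u(0)=b_n$ into the defining series \eqref{serie : trans} and immediately obtain $(T_0^\gamma u)(z)=\sum_n b_n e_n(z)=u(z)$. For \itemref{b}, I expand $B_r^n u(z)$ inside \eqref{serie : trans} and interchange the two summations; the rearrangement is legitimized by the absolute convergence estimate already used in Proposition~\ref{prop : conti trans}. The outcome is the manifestly symmetric expression
\begin{equation*}
(T_z^\gamma u)(w)=\sum_{k=0}^{\infty} b_k\sum_{n=0}^{k} e_n(w)\,e_{k-n}(z),
\end{equation*}
from which \itemref{b} is immediate.

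For \itemref{d}, I would apply $B_r$ termwise to the series \eqref{serie : trans}; this is justified because Proposition~\ref{prop : conti trans} gives convergence in $\hr$ and Corollary~\ref{prop : cont br} gives continuity of $B_r$ on $\hr$. The shift action $B_r e_n = e_{n-1}$, followed by the index change $n\mapsto n+1$, then identifies the result with $T_z^\gamma(B_r u)$ at once. For \itemref{e}, rather than wrestle with a triple-sum reshuffle I would exploit the product formula \eqref{for : prod}: using Proposition~\ref{pr : jgamma egenfunction} the series \eqref{serie : trans} for $u=j_\gamma(\lambda\cdot)$ telescopes to $T_z^\gamma j_\gamma(\lambda\cdot)(w)=j_\gamma(\lambda z)\,j_\gamma(\lambda w)$, so iterating gives
\begin{equation*}
T_z^\gamma T_w^\gamma j_\gamma(\lambda\cdot)(\xi)=j_\gamma(\lambda z)\,j_\gamma(\lambda w)\,j_\gamma(\lambda \xi),
\end{equation*}
which is symmetric in $z$ and $w$. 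By linearity, \itemref{e} holds on the linear span of $\{j_\gamma(\lambda\cdot):\lambda\in\C\}$, which is dense in $\hr$ by Corollary~\ref{cor : dens}; the continuity of $T_z^\gamma$ and $T_w^\gamma$ (Proposition~\ref{prop : conti trans}) then transfers the identity to all of $\hr$.

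The only real obstacle is the convergence bookkeeping underlying \itemref{b} and \itemref{d}: both require the termwise estimates from Lemma~\ref{lem : ineq br} (as already marshalled in the proof of Proposition~\ref{prop : conti trans}) to justify the reshuffle of sums and the term-by-term action of $B_r$. Once those technicalities are packaged, each of the four items becomes a short calculation.
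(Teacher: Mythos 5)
Your proof is correct, but it takes a partly different route from the paper's. The paper disposes of all four identities in one line: each is checked for the eigenfunctions $u=j_\gamma(\lambda\cdot)$ (via Proposition~\ref{pr : jgamma egenfunction} and the product formula \eqref{for : prod}) and then transferred to all of $\hr$ by the density of $J(\C)$ (Corollary~\ref{cor : dens}) together with the continuity in $u$ of both sides of each identity. You use that density argument only for \itemref{e}; for \itemref{a}, \itemref{b} and \itemref{d} you instead work directly with the defining series \eqref{serie : trans} through the ladder identity $B_re_n=e_{n-1}$, which is easily verified from \eqref{eq : Br1} and \eqref{const : alpha} exactly as you indicate. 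This buys something concrete: your computation of \itemref{b} produces the symmetric double sum $\sum_k b_k\sum_{n=0}^k e_n(w)e_{k-n}(z)$, which is precisely the generalized addition formula \eqref{eq : addition} stated in the Remark that follows the proposition, so your argument proves that remark along the way; the paper's argument is shorter but leaves both the verification on $j_\gamma(\lambda\cdot)$ and the continuity of the two sides implicit. Your convergence bookkeeping is adequately referenced: the interchange of summations in \itemref{b} does require an absolute-convergence bound on the double series rather than just the single-series estimate \eqref{eq : est term serie trans}, but the same Cauchy estimates on the $b_k$ combined with \eqref{eq : limsup} deliver it, so this is a matter of writing out one more inequality rather than a gap.
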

\begin{proof}These properties are satisfied when $u=j_\ga(\lambda \cdot)$ where $\lambda\in\C$ and hence, by Corollary~\ref{cor : dens}, are satisfied by all $u\in\hr$.
\end{proof}
\begin{proposition}
Let $u \in \hr$. Then, the mapping  $F_{u}\colon z\mapsto T^\ga_zu$ is continuous from $\C$ into $\hr$.
\end{proposition}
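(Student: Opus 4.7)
The plan is to reduce continuity of $F_u\colon\C\to\hr$ to convergence in each generating seminorm $\|\cdot\|_R$ of the Fr\'echet space $\hr$; that is, to fix $z_0\in\C$ and $R>0$ and show $\|T_z^\ga u-T_{z_0}^\ga u\|_R\to 0$ as $z\to z_0$. The key observation is that the proof of Proposition~\ref{prop : conti trans} already establishes uniform convergence of the defining series on compact polydisks of $\C\times\C$, and this turns out to be exactly what is needed.

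More precisely, I would introduce the truncation
$$S_N(z)(w)=\sum_{n=0}^{N}\frac{w^{rn}}{\alpha_{rn}(\ga)}B_r^n u(z),$$
and first verify that each $S_N\colon\C\to\hr$ is continuous: for $|w|\le R$ one has
$$\|S_N(z)-S_N(z_0)\|_R\;\le\;\sum_{n=0}^{N}\frac{R^{rn}}{\alpha_{rn}(\ga)}\bigl|B_r^n u(z)-B_r^n u(z_0)\bigr|,$$
a finite sum whose terms vanish as $z\to z_0$ since each $B_r^n u$ is entire (iterating Corollary~\ref{prop : cont br}). Second, choosing $R'>|z_0|$ large enough that $(R/R')^r M<1$, the estimate \eqref{eq : est term serie trans} extracted from the proof of Proposition~\ref{prop : conti trans} shows that $S_N(z)\to T_z^\ga u$ in the seminorm $\|\cdot\|_R$, uniformly for $z$ in the compact disk $\{|z|\le R'\}$.

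With these two ingredients, the conclusion is the standard ``a locally uniform limit of continuous functions is continuous'' argument: given $\varepsilon>0$, pick $N$ so that $\sup_{|z|\le R'}\|S_N(z)-T_z^\ga u\|_R<\varepsilon/3$, then $\delta>0$ so that $|z-z_0|<\delta$ forces $\|S_N(z)-S_N(z_0)\|_R<\varepsilon/3$, and apply the triangle inequality. The step I expect to be most fiddly, though not genuinely difficult, is keeping track of the three radii ($R$ governing $w$, $R'$ governing $z$, and $\delta$ for the neighborhood of $z_0$); all of the genuine analytic work lives inside Lemma~\ref{lem : ineq br} and has already been exploited in Proposition~\ref{prop : conti trans}.
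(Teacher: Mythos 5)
Your argument is correct, but it follows a genuinely different route from the paper. The paper first invokes the symmetry $T^{\gamma}_{z}u(w)=T^{\gamma}_{w}u(z)$ (Proposition~\ref{prop : prop trans}\itemref{b}) to rewrite $F_u(z)=\sum_{n\ge 0}\frac{z^{rn}}{\alpha_{rn}(\gamma)}B_r^n u$ as a power series \emph{in $z$} whose coefficients $B_r^n u/\alpha_{rn}(\gamma)$ live in the Fr\'echet space $\hr$; Lemma~\ref{lem : ineq br} shows this vector-valued series has infinite radius of convergence, and continuity (indeed $\hr$-valued holomorphy) then follows from the general theory of vector-valued power series cited in \cite{vectpower}. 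You instead keep the original series \eqref{serie : trans}, with $w$ carrying the powers and the $z$-dependence sitting in the scalar coefficients $B_r^n u(z)$, and run a direct $\varepsilon/3$ argument: continuity of each truncation $S_N$ is immediate because each $B_r^n u$ is entire, and Lemma~\ref{lem : ineq br} gives the tail bound $\sup_{|z|\le R'}\|S_N(z)-T_z^\gamma u\|_R\le \|u\|_{2R'}\sum_{n>N}\frac{(R/R')^{rn}M^n(nr)!}{\alpha_{rn}(\gamma)}\to 0$ once $(R/R')^rM<1$, so the locally uniform limit of continuous maps is continuous. What you gain is self-containedness: your proof needs neither the symmetry property (whose proof in the paper rests on the density Corollary~\ref{cor : dens}) nor the external reference on vector-valued holomorphic functions. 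What the paper's route buys is brevity and a stronger conclusion for free, namely that $F_u$ is actually holomorphic as an $\hr$-valued map, not merely continuous. Both proofs ultimately draw all their analytic content from the same estimate \eqref{eq : ineq br}.
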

\begin{proof}By Proposition~\ref{prop : prop trans}\itemref{b}, we can write
\begin{equation}
T^\ga_zu(w)=T^\ga_wu(z)=\sum_{n=0}^{+\infty} \frac{z^{rn}}{\alpha_{rn}(\gamma)} B_{r}^{n}u(w),\qquad\text{for all $z,w\in\C$},
\end{equation}
or
\begin{equation}\label{eq : serie trans 1}
T^\ga_zu=\sum_{n=0}^{+\infty} \frac{z^{rn}}{\alpha_{rn}(\gamma)} B_{r}^{n}u,\qquad\text{for all $z\in\C$}.
\end{equation}
Thus, the mapping  $F_u$ possesses a power series expansion with coefficients in the Fréchet space $\hr$ and radius of convergence %
$R=+\infty$ (see \cite{vectpower}*{Apend. A}). So it is continuous on~$\C$.
\end{proof}
\subsection{The generalized convolution}
\begin{definition}\label{def : conv}
For $T\in\dhr$ and $u \in \hr$ we define the generalized convolution of $T$ and $u$ to be the function $T\star_\gamma u\colon\C\to\C$ given by
\begin{equation}\label{conv}
 T{}\star_\gamma{}u(z) = \langle T, T^{\gamma}_{z}u \rangle,\qquad z\in\C.
 \end{equation}
 \end{definition}
\begin{proposition}\label{prop : conv oper}
 For every  $T\in \dhr$, the mapping  $u\to T\star_\gamma u$ \ is continuous from  $\hr$  into itself.
\end{proposition}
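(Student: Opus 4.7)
The plan is to expand the translation in a power series in $z$ with coefficients in $\hr$, apply $T$ termwise, and then obtain a seminorm estimate that simultaneously shows the image is in $\hr$ and that the map is continuous.

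First I would use the series representation \eqref{eq : serie trans 1} of $T_z^\gamma u$, which converges in $\hr$ for every $z\in\C$ by the preceding proposition. Since $T\in\dhr$ is continuous, I may apply it termwise to obtain, for every $z\in\C$,
\begin{equation*}
T\star_\gamma u(z)=\langle T,T_z^\gamma u\rangle=\sum_{n=0}^{+\infty}\frac{z^{rn}}{\alpha_{rn}(\gamma)}\langle T,B_r^n u\rangle.
\end{equation*}
This already gives $T\star_\gamma u$ as a formal power series in $z^r$; the remaining task is to control its convergence on every compact disk and estimate it by a seminorm of $u$.

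Next, the continuity of $T$ provides constants $C,a>0$ such that $|\langle T,v\rangle|\le C\|v\|_a$ for all $v\in\hr$. I would note that the same inequality holds when $a$ is replaced by any $a'\geq a$, and then combine it with Lemma~\ref{lem : ineq br} to get
\begin{equation*}
|\langle T,B_r^n u\rangle|\le C\|B_r^n u\|_{a'}\le \frac{CM^n(nr)!}{(a')^{rn}}\,\|u\|_{2a'},\qquad n\in\N.
\end{equation*}
Given $R>0$, I would use the limsup identity \eqref{eq : limsup} to choose $a'\ge a$ large enough that $(RM^{1/r}/a')^r<1$. The termwise bound then yields, for $|z|\le R$,
\begin{equation*}
|T\star_\gamma u(z)|\le C\|u\|_{2a'}\sum_{n=0}^{+\infty}\frac{(nr)!}{\alpha_{rn}(\gamma)}\left(\frac{RM^{1/r}}{a'}\right)^{rn}=:K_R\|u\|_{2a'},
\end{equation*}
where the series converges by the root test in view of \eqref{eq : limsup}. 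This single inequality does both jobs: the local uniform convergence on $\C$ shows $T\star_\gamma u$ is entire, and since its power series contains only powers $z^{rn}$ we conclude $T\star_\gamma u\in\hr$; the estimate $\|T\star_\gamma u\|_R\le K_R\|u\|_{2a'}$ gives continuity of the linear map $u\mapsto T\star_\gamma u$ from $\hr$ into itself.

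The delicate step is not the algebra but the bookkeeping of constants: the continuity index $a$ of $T$ is fixed, whereas Lemma~\ref{lem : ineq br} forces a factor $(nr)!/(a')^{rn}$ that must be tamed for arbitrarily large $R$. The key move is to exploit the freedom to enlarge $a$ to an $a'=a'(R)$ while using the fact, from \eqref{eq : limsup}, that $(nr)!/\alpha_{rn}(\gamma)$ has geometric $n$-th root equal to $1$, so that any $a'>RM^{1/r}$ suffices.
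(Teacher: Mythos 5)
Your proof is correct and follows essentially the same route as the paper: expand $T_z^\gamma u$ via the series \eqref{eq : serie trans 1}, apply $T$ termwise by continuity, bound $\langle T,B_r^nu\rangle$ with Lemma~\ref{lem : ineq br}, and use \eqref{eq : limsup} after enlarging the continuity radius to make the resulting series geometric-dominated. The only difference is cosmetic: you make explicit the step of replacing the continuity index $a$ by a larger $a'(R)$, which the paper leaves implicit by deferring to the proof of Proposition~\ref{prop : conti trans}.
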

\begin{proof} Let $u\in\hr$. Since, by Proposition~\ref{prop : conti trans}, the series in \eqref{eq : serie trans 1} converges in $\hr$ it follows that
\begin{equation*}
T{}\star_\gamma{}u(z)=\sum_{n=0}^{+\infty}\frac{\langle T,B_{r}^{n}u\rangle}{\alpha_{rn}(\ga)} z^{rn},\qquad\text{for all $z\in\C$}.
\end{equation*}
Hence, $T{}\star_\gamma{}u\in\hr$. On the other hand, by the continuity of $T$, there exist $C$, $R'>0$ such that
\begin{equation}
|\langle T,\varphi\rangle|\leq C\|\varphi\|_{R'},\qquad\text{for all $\varphi\in\hr$}.
\end{equation}
Now, let $R>0$. Using Lemma~\ref{lem : ineq br} we conclude that
\begin{equation}
\left|\frac{\langle T,B_{r}^{n}u\rangle}{\alpha_{rn}(\ga)} z^{rn}\right|\leq C\frac{R^{nr}M^n(nr)!}{(R')^{nr}\alpha_{rn}(\ga)}\|u\|_{2R^\prime},
\end{equation}
for all $z\in\C$ such that $|z|\leq R$ and all $n\in\N$. The rest of the proof runs in a similar way as the proof of Proposition~\ref{prop : conti trans}.
\end{proof}
\begin{definition} If $T$, $S\in \hr$, the convolution $T{}\star_\gamma{}S$ is the element of $\dhr$ defined by
 \begin{equation*}
 \langle T{}\star_\gamma{}S, u\rangle = \langle T, S{}\star_\gamma{}u \rangle,\qquad u\in \hr.
 \end{equation*}
\end{definition}
In the following proposition, we give some algebraic properties of the generalized convolution. The proof follows from Theorem~\ref{th : pw}.
\begin{proposition}\label{prop : propri conv} Let $T,S,R\in\dhr$. Then
\begin{enumerate}\romanletters
\item $\mathcal{F}_{\gamma} (T\star_\gamma S)=\mathcal{F}_{\gamma}(T)\mathcal{F}_{\gamma}(S)$.
\item $T\star_\gamma S=S\star_\gamma T$.
\item $T\star_\gamma(S\star_\gamma R)=(T\star_\gamma S)\star_\gamma R$.
\item $T\star_\gamma \delta=T,$ where $\delta$ denotes the Dirac functional.
\item $B_{r}(T\star_\gamma S)=(B_{r}T)\star_\gamma S=T\star_\gamma(B_{r}S),$ where
$B_{r}$ is defined on $\mathcal{H}_{r}^{\prime}$ by transposition.
\end{enumerate}
\end{proposition}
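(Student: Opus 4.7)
The strategy is to reduce everything to Theorem~\ref{th : pw}: if I can establish (i) directly from the definitions and the product formula \eqref{for : prod}, then (ii), (iii) and (iv) will be immediate consequences of the injectivity of $\F\colon\dhr\to\e$ (pointwise multiplication in $\e$ is commutative, associative, and has constant $1$ as its identity), and (v) will follow by combining (i) with the eigenfunction identity of Proposition~\ref{pr : jgamma egenfunction}.

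For (i), I would unfold the definitions. Fix $z\in\C$. By Definition~\ref{def : conv} applied twice,
\begin{equation*}
\F(T\star_\ga S)(z)=\langle T\star_\ga S,j_\ga(z\cdot)\rangle=\langle T(w),(S\star_\ga j_\ga(z\cdot))(w)\rangle,
\end{equation*}
and Proposition~\ref{prop : conv oper} guarantees that $S\star_\ga j_\ga(z\cdot)$ really is an element of $\hr$, so the outer pairing is legal. For the inner expression, the product formula \eqref{for : prod} gives, for every $w\in\C$,
\begin{equation*}
(S\star_\ga j_\ga(z\cdot))(w)=\langle S(y),T_w^\ga[j_\ga(z\cdot)](y)\rangle=\langle S(y),j_\ga(zw)\,j_\ga(zy)\rangle=j_\ga(zw)\,\F(S)(z).
\end{equation*}
Pulling out the scalar $\F(S)(z)$ and collapsing the remaining pairing, I obtain $\F(T\star_\ga S)(z)=\F(S)(z)\F(T)(z)$, which is (i). This is the step that carries all the analytic content; the rest is essentially formal.

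For (ii)--(iv), (i) yields
\begin{equation*}
\F(T\star_\ga S)=\F(T)\F(S)=\F(S)\F(T)=\F(S\star_\ga T),
\end{equation*}
and similarly $\F(T\star_\ga(S\star_\ga R))=\F((T\star_\ga S)\star_\ga R)$; injectivity of $\F$ (Theorem~\ref{th : pw}) forces the distributions to agree. For (iv), a direct computation gives $\F(\delta)(z)=\langle\delta(w),j_\ga(zw)\rangle=j_\ga(0)=1$, whence $\F(T\star_\ga\delta)=\F(T)\cdot 1=\F(T)$, and injectivity of $\F$ again yields $T\star_\ga\delta=T$.

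For (v), I use that $B_r$ is defined on $\dhr$ by transposition, $\langle B_rT,u\rangle=\langle T,B_ru\rangle$, which is legitimate since $B_r$ is continuous on $\hr$ by Corollary~\ref{prop : cont br}. Combining this with Proposition~\ref{pr : jgamma egenfunction}, which says $B_r[j_\ga(z\cdot)]=-z^r j_\ga(z\cdot)$, I find
\begin{equation*}
\F(B_rT)(z)=\langle T,B_r[j_\ga(z\cdot)]\rangle=-z^r\,\F(T)(z).
\end{equation*}
Multiplication by $-z^r$ therefore intertwines $B_r$ and $\F$, so applying (i) gives
\begin{equation*}
\F((B_rT)\star_\ga S)=-z^r\F(T)\F(S)=\F(B_r(T\star_\ga S))=\F(T\star_\ga(B_rS)),
\end{equation*}
and one last appeal to the injectivity of $\F$ closes the argument. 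The only real obstacle is (i); once that is in place, the algebra of $(\dhr,\star_\ga)$ is simply transported from the algebra of pointwise products in $\e$.
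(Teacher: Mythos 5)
Your proof is correct and takes the route the paper itself intends: the paper's entire justification is the one-line remark that the proposition ``follows from Theorem~\ref{th : pw}'', and your argument supplies exactly the missing content, namely the multiplicativity $\F(T\star_\gamma S)=\F(T)\F(S)$ via the product formula \eqref{for : prod}, after which (ii)--(v) are transported from the algebra of pointwise products in $\e$ back to $\dhr$ by the injectivity of $\F$. The auxiliary facts you invoke ($\F(\delta)=1$ and $\F(B_rT)(z)=-z^r\F(T)(z)$ from Proposition~\ref{pr : jgamma egenfunction}) are verified correctly.
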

\begin{proposition}Let $T\in\dhr$. If $T$ is nonzero, then the map $T\!\star_\gamma\colon u\mapsto T\star_\gamma u$ from $\hr$ into itself is surjective.
\end{proposition}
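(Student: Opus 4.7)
The plan is a duality argument via the Paley--Wiener isomorphism. Under $\F\colon\dhr\to\e$, the transpose of $C_T\colon u\mapsto T\star_\gamma u$ corresponds to the multiplication operator $M_h\colon\psi\mapsto h\psi$ on $\e$, where $h:=\F(T)$. Surjectivity of $C_T$ is then equivalent, by the closed range theorem for Fr\'echet spaces (and the reflexivity of $\hr$, which lets weak-$\ast$ and strong closed subspaces of $\dhr$ coincide), to $M_h$ being injective with closed range.

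First I would identify the transpose. From the definitions of the two generalized convolutions together with the commutativity $T\star_\gamma S=S\star_\gamma T$ from Proposition~\ref{prop : propri conv}, one checks that $C_T^t(S)=T\star_\gamma S$. Applying $\F$ and the product formula $\F(T\star_\gamma S)=\F(T)\F(S)$ gives $\F\circ C_T^t\circ\F^{-1}=M_h$. Injectivity of $M_h$ is then immediate: $T\neq 0$ forces $h\neq 0$ by Theorem~\ref{th : pw}, and multiplication by a nonzero entire function is injective. By Hahn--Banach this already yields density of the range of $C_T$ in $\hr$.

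The main step is closedness of the range of $M_h$ in $\e$. Suppose $h\psi_n\to\phi$ in $\e$. Convergence in $\e$ entails uniform convergence on compact subsets of $\C$, so at each zero $z_0$ of $h$ the limit $\phi$ vanishes to at least the multiplicity of $h$ at $z_0$. Hence $\psi:=\phi/h$ extends to an entire function, which is $r$-even because $\phi$ and $h$ are. The crucial classical ingredient is a division theorem for entire functions of exponential type: if $\phi$ and $h$ are entire of exponential type and $\phi/h$ is entire, then $\phi/h$ is also of exponential type. Applied here, this gives $\psi\in\e$, so $\phi=M_h(\psi)$ belongs to the range.

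The main obstacle is precisely this final division step---showing that the pointwise quotient of two entire functions of exponential type retains exponential type, which rests on a minimum-modulus or Hadamard-factorization estimate for $h$. Once it is in place, injectivity and closed range of $M_h$ transfer through $\F$ to density and closedness of the range of $C_T$ in $\hr$, yielding surjectivity.
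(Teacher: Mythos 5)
Your proposal is correct and follows essentially the same route as the paper: reduce surjectivity to injectivity and closedness of the range of the transpose, identify the transpose under $\F$ with multiplication by $\F(T)$ on $\e$, and settle the closed-range step by dividing entire functions of exponential type (the paper invokes Lindel\"of's theorem for exactly the division estimate you flag as the crucial ingredient, and uses the Montel $DF$-space structure of $\dhr$ to reduce closedness to sequential closedness).
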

\begin{proof} From \textrm{Lemma}~23.31 and \textrm{Theorem}~26.3 of \cite{vogt}, we know that the present statement is equivalent to the two properties that %
 the dual map $(T\star_\gamma)^\prime\colon\dhr\to\dhr$ is injective and has closed image. The first condition follows from \textrm{Proposition}~\ref{prop : propri conv}. %
 Let us prove the second condition. Since $\dhr$ has the structure of a Montel $DF$-space, by \cite{destopo}*{\textrm{Theorem}~15.12}, it is enough to prove that %
 $(T\star_\gamma)^\prime$ has a sequentially closed image. Suppose that we have a sequence $(S_n)_n$ in $\dhr$ and $S\in\dhr$ such that $(S_n\star_\gamma T)\to S$
in $\dhr$. Then by \textrm{Theorem}~\ref{th : pw} $\mathcal{F}_{\gamma}(S_n)\mathcal{F}_{\gamma}(T)\to \mathcal{F}_{\gamma}(S)$ in $\e$. Since $\e$ is
continuously embedded in $\hr$, the convergence holds also uniformly over compact sets. Therefore, the entire function $\mathcal{F}_{\gamma}(S)$ vanishes at all the zeros of $\mathcal{F}_{\gamma}(T)$, with at least the same multiplicity. %
Hence, $f= \mathcal{F}_{\gamma}(S)/\mathcal{F}_{\gamma}(T)$ is an entire function. So, Lindelöf's theorem \cite{Ber2}*{§4.5.7} ensures that $f$ is of exponential type. %
Using Theorem~\ref{th : pw}, we conclude that there is $R\in\dhr$ such that $\F(R)=f$ and hence,
\begin{equation*}
\mathcal{F}_{\gamma}(R)\mathcal{F}_{\gamma}(T) = \mathcal{F}_{\gamma}(S).
\end{equation*}
Thus, $S = T\star_\gamma R$. This completes the proof.
\end{proof}
\section{ Chaotic  character of the generalized convolution~operators}\label{s5}
We begin by characterizing the continuous linear mappings from $\hr$ into itself that commute with generalized translation operators.
\begin{proposition}\label{prop : commut conv} If $\mathcal{L}$ is a continuous linear mapping from $\hr$ into itself, then following are equivalent:
\begin{enumerate}\romanletters
\item\label{v} The operator $\mathcal{L}$ commutes with the  operator $B_{r}$.
\item\label{i}The operator $\mathcal{L}$ commutes with $T^{\gamma}_{z}$ for all $z\in\C$.
\item\label{ii} There exists a unique $T\in\dhr$ such that
\begin{equation}\label{eq : l conv}
\mathcal{L} u=T\star_\gamma u, \qquad\text{for all $u\in\hr$}.
\end{equation}
\item\label{iv} There exists $\Phi(z)=\sum_{n=0}^{+\infty}\frac{b_{n}}{\alpha_{rn}}z^{rn}\in\e$ such that $\mathcal{L}=\Phi(B_{r})$. That is, for every $u\in\hr$,
\begin{equation}\label{eq : phi}
\mathcal{L}(u)(z)=\left[\Phi(B_{r})u\right](z)=\sum_{n=0}^{+\infty}\frac{b_{n}}{\alpha_{rn}}B_{r}^{n}u(z),\qquad z\in\C.
\end{equation}
Moreover, the series in~\eqref{eq : phi} converges in $\hr$.
\end{enumerate}
\end{proposition}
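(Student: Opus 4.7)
My plan is to establish the cycle \itemref{v} $\Rightarrow$ \itemref{iv} $\Rightarrow$ \itemref{ii} $\Rightarrow$ \itemref{i} $\Rightarrow$ \itemref{v}, relying throughout on the eigenfunction characterization of $j_\ga(\lambda\cdot)$ in Proposition~\ref{pr : jgamma egenfunction}, the product formula \eqref{for : prod}, and the density of $J(\C)$ in $\hr$ from Corollary~\ref{cor : dens}.

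For \itemref{v} $\Rightarrow$ \itemref{iv}, I use that $\mathcal{L} j_\ga(\lambda\cdot)$ lies in $\hr$ and, by commutation with $B_r$, is an eigenfunction of $B_r$ with eigenvalue $-\lambda^r$; the uniqueness part of Proposition~\ref{pr : jgamma egenfunction} (valid since every element of $\hr$ has vanishing derivatives of order $1,\dots,r-1$ at $0$) forces $\mathcal{L} j_\ga(\lambda\cdot)=c(\lambda)\,j_\ga(\lambda\cdot)$ with $c(\lambda)=\mathcal{L} j_\ga(\lambda\cdot)(0)$. Termwise application of the continuous operator $\mathcal{L}$ to the series \eqref{serie : j} and the notation $b_n:=\mathcal{L}(z^{rn})(0)$ give
\begin{equation*}
c(\lambda)=\sum_{n=0}^{+\infty}\frac{(-1)^n b_n}{\alpha_{rn}(\ga)}\lambda^{rn}.
\end{equation*}
The continuity estimate $\|\mathcal{L} u\|_R\leq C\|u\|_{R'}$ coupled with the Cauchy bound produces $|b_n|\leq C(R')^{rn}$, so Lemma~\ref{pr : est bn} yields $\Phi(\mu):=\sum_n (b_n/\alpha_{rn}(\ga))\mu^{rn}\in\e$. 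Running the same estimate as in Proposition~\ref{prop : conv oper} using Lemma~\ref{lem : ineq br} together with \eqref{eq : limsup} shows that $\sum_n(b_n/\alpha_{rn}(\ga))B_r^n u$ converges in $\hr$ and defines a continuous operator $\Phi(B_r)$. Since $\Phi(B_r)j_\ga(\lambda\cdot)=c(\lambda)j_\ga(\lambda\cdot)=\mathcal{L} j_\ga(\lambda\cdot)$ for every $\lambda\in\C$, Corollary~\ref{cor : dens} and the continuity of both operators extend $\mathcal{L}=\Phi(B_r)$ to all of $\hr$.

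For \itemref{iv} $\Rightarrow$ \itemref{ii}, I take $T=T_\Phi$ as in \eqref{eq : Tu in dh}, so that $T_\Phi(w^{rn})=b_n$; reading the expansion \eqref{serie : trans} in the variable $w$ and applying $T_\Phi$ termwise gives
\begin{equation*}
T_\Phi\star_\ga u(z)=\langle T_\Phi, T_z^\ga u\rangle=\sum_{n=0}^{+\infty}\frac{b_n}{\alpha_{rn}(\ga)} B_r^n u(z)=\Phi(B_r) u(z)=\mathcal{L} u(z).
\end{equation*}
Uniqueness of $T$ follows because $T\star_\ga u(0)=\langle T,u\rangle$ by Proposition~\ref{prop : prop trans}\itemref{a}, so $T\star_\ga u\equiv 0$ for all $u\in\hr$ forces $T=0$. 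For \itemref{ii} $\Rightarrow$ \itemref{i}, the product formula \eqref{for : prod} gives $T\star_\ga j_\ga(\lambda\cdot)=\F(T)(\lambda)\,j_\ga(\lambda\cdot)$ and $T_w^\ga j_\ga(\lambda\cdot)=j_\ga(\lambda w)\,j_\ga(\lambda\cdot)$, so both $\mathcal{L} T_w^\ga j_\ga(\lambda\cdot)$ and $T_w^\ga\mathcal{L} j_\ga(\lambda\cdot)$ equal $\F(T)(\lambda)j_\ga(\lambda w)j_\ga(\lambda\cdot)$; Corollary~\ref{cor : dens} together with the continuity of $\mathcal{L}$ and $T_w^\ga$ then extends the commutation to $\hr$. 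Finally \itemref{i} $\Rightarrow$ \itemref{v}: substituting the $\hr$-valued power series \eqref{eq : serie trans 1} into the identity $\mathcal{L} T_z^\ga u=T_z^\ga\mathcal{L} u$, applying the continuous operator $\mathcal{L}$ termwise, and comparing the coefficients of $z^r$ yields $\mathcal{L} B_r u=B_r\mathcal{L} u$.

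The main technical obstacle is the construction of $\Phi(B_r)$ inside \itemref{v} $\Rightarrow$ \itemref{iv}: one must simultaneously verify that the numbers $b_n=\mathcal{L}(z^{rn})(0)$ grow slowly enough for $\Phi$ to lie in $\e$ and that the formal operator series $\sum_n (b_n/\alpha_{rn}(\ga)) B_r^n$ converges in the topology of $\hr$. Combining the continuity estimate for $\mathcal{L}$, Lemma~\ref{lem : ineq br}, and the asymptotic \eqref{eq : limsup} resolves both points in a manner parallel to the proof of Proposition~\ref{prop : conv oper}; the remaining implications are essentially formal consequences of the product formula and Corollary~\ref{cor : dens}.
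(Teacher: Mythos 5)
Your argument is correct, but it runs the equivalences around the opposite cycle from the paper: you prove \itemref{v}$\Rightarrow$\itemref{iv}$\Rightarrow$\itemref{ii}$\Rightarrow$\itemref{i}$\Rightarrow$\itemref{v}, whereas the paper proves \itemref{v}$\Rightarrow$\itemref{i}$\Rightarrow$\itemref{ii}$\Rightarrow$\itemref{iv}$\Rightarrow$\itemref{v}, and the real work lands in different places. The paper's heavy step is \itemref{ii}$\Rightarrow$\itemref{iv} (apply $T$ termwise to the translation series \eqref{serie : trans} to read off $b_n=\langle T,w^{rn}\rangle$), while its passage from \itemref{i} to \itemref{ii} is purely formal, via the functional $u\mapsto\mathcal{L}(u)(0)$ and the symmetry $T_z^\ga u(w)=T_w^\ga u(z)$. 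Your heavy step is \itemref{v}$\Rightarrow$\itemref{iv}, done spectrally: commutation with $B_r$ forces $\mathcal{L}$ to preserve each eigenspace $\C\,j_\ga(\lambda\cdot)$, and $\mathcal{L}=\Phi(B_r)$ then follows from Corollary~\ref{cor : dens}. This is more conceptual (the operator is recovered from its symbol on the eigenfunctions), at the price of one fact you should make explicit: Proposition~\ref{pr : jgamma egenfunction} as stated only asserts uniqueness of the \emph{normalized} solution with $u(0)=1$, so to get $\mathcal{L}j_\ga(\lambda\cdot)=c(\lambda)j_\ga(\lambda\cdot)$ also when $c(\lambda)=0$ you need the eigenspace of $B_r$ in $\hr$ for the eigenvalue $-\lambda^r$ to be one-dimensional, i.e.\ that a solution $v\in\hr$ with $v(0)=0$ vanishes identically. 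This follows at once from the Taylor-coefficient recurrence $a_{n+1}\,\alpha_{r(n+1)}(\ga)/\alpha_{rn}(\ga)=-\lambda^r a_n$, whose ratio is nonzero because $\ga_i>-1$, but it is not literally contained in the cited proposition. The remaining implications --- \itemref{iv}$\Rightarrow$\itemref{ii} via the functional $T_\Phi$ of \eqref{eq : Tu in dh}, \itemref{ii}$\Rightarrow$\itemref{i} via the product formula \eqref{for : prod} and density, and \itemref{i}$\Rightarrow$\itemref{v} by comparing coefficients in \eqref{eq : serie trans 1} --- are sound, and the convergence estimates you invoke (Lemma~\ref{pr : est bn}, Lemma~\ref{lem : ineq br}, and \eqref{eq : limsup}) are exactly the ones the paper itself uses.
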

\begin{proof}
\eqref{v}$\Rightarrow$\eqref{i}: Since, by Proposition~\ref{prop : conti trans}, the series in \eqref{serie : trans} converges in $\hr$ and $\mathcal{L}$ is continuous on $\hr$ it follows that
\begin{equation*}
\mathcal{L}(T^{\ga}_{z}u)=\sum_{n=0}^{+\infty}
\frac{z^{rn}}{\alpha_{rn}(\gamma)}\mathcal{L}\left( B_{r}^{n}u\right)=\sum_{n=0}^{+\infty} \frac{z^{rn}}{\alpha_{rn}(\gamma)} B_{r}^{n}\mathcal{L}(u)=T^{\ga}_{z}\mathcal{L}(u).
\end{equation*}
\eqref{i}$\Rightarrow$\eqref{ii}: Suppose that $T\in\dhr$ is such that \eqref{eq : l conv} holds. Then, obviously, $T(u)=(\mathcal{L}u)(0)$ for all $u\in\hr$. Hence $T$ is unique. Conversely, the mapping $T\colon u\mapsto \mathcal{L}(u)(0)$ belong to $\dhr$ and, by Proposition~\ref{prop : prop trans} \eqref{a} and~\eqref{b}, for all $u\in\hr$ we have
\begin{equation*}
\mathcal{L}(u)(z)=[T_0^\gamma(\mathcal{L}(u)](z)=[T_z^\gamma(\mathcal{L}(u)](0)=\mathcal{L}(T_z^\gamma u)(0)=\langle T, T_z^\gamma u\rangle.
\end{equation*}
for all $z\in\C$. Thus, $\mathcal{L}(u)=T\star_\gamma u$.\\
\itemref{ii}$\Rightarrow$\itemref{iv}: Suppose that $T\in\dhr$ is such that \eqref{eq : l conv} holds. Since, by Proposition~\ref{prop : conti trans} the series in \eqref{serie : trans}
    converges in $\hr$ with respect to $w$, we have
\begin{equation}\label{1}
(\mathcal{L}u)(z)=\sum_{n=}^{+\infty}\frac{\langle T, w^{rn}\rangle}{\alpha_{rn(\gamma)}}B_r^nu(z),\qquad\text{for all $z\in\C$}.
\end{equation}
For $n\in\N$, set $b_n=\langle T, w^{rn}\rangle$. Using the continuity of $T$, we can see that there exist $C$, $a>0$ such that $|b_n|\leq C a^{rn}$ for all $n\in\N$. Hence, by Lemma~\ref{pr : est bn}, $\Phi(z)=\sum_{n=0}^{+\infty}\frac{b_{n}}{\alpha_{rn}}z^{rn}\in\e$. The convergence in $\hr$ of the series in \eqref{1} can be established similarly to the proof of Proposition~\ref{prop : conti trans} using Lemma~\ref{lem : ineq br}.\\
\eqref{iv}$\Rightarrow$\eqref{v}: Since $B_r$ is continuous on $\hr$ it follows that
\begin{equation*}
B_r(\mathcal{L}u)=B_r\left(\sum_{n=0}^{+\infty}\frac{b_{n}}{\alpha_{rn}}B_{r}^{n}u\right)=\sum_{n=0}^{+\infty}\frac{b_{n}}{\alpha_{rn}}B_{r}^{n+1}u=\mathcal{L}(B_ru),
\end{equation*}
for all $u\in\hr$. This completes the proof.
\end{proof}

One of the most important result in theory of hypercyclic and chaotic operator is the Godefroy-Shapiro criterion given by the following theorem.
\begin{theorem}[Godefroy-Shapiro]\label{th : ged-shap}(See \cite{linchao}*{Theorem 3.1 Page 69}) Let $X$ a be a Fréchet space and Let $T$ be an operator on $X$. Suppose that the subspaces
\begin{align}
X_0:=&\, \textrm{\em span}\{x\in X\, \mid Tx = \lambda x,\quad \textrm{for some $\lambda\in\C$ with $|\lambda|<1$}\},\label{set : X}\\
Y_0:=&\, \textrm{\em span}\{x\in X\, \mid Tx = \lambda x,\quad \textrm{for some $\lambda\in\C$ with $|\lambda|>1$}\}\label{set : Y}
\end{align}
are dense in $X$. Then $T$ is hypercyclic. If, moreover, the subspace
\begin{equation}\label{set : Z}
Z_0 := \textrm{\em span}\{x\in X\; \mid Tx =e^{i\pi\alpha} x ,\quad\textrm{for some $\alpha\in\Q$}\}
\end{equation}
is dense in $X$, then $T$ is chaotic.
\end{theorem}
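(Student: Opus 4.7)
The plan is to deduce hypercyclicity from \emph{Birkhoff's transitivity theorem}, which states that a continuous operator on a separable Fr\'echet space is hypercyclic if and only if it is topologically transitive, i.e.\ for every pair of nonempty open sets $U$, $V\subset X$ there exists $n\in\N$ with $T^n(U)\cap V\neq\emptyset$. So the task reduces to constructing, for given $U$ and $V$, a witness $w\in U$ whose forward orbit enters $V$.

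First I would exploit the density of $X_0$ and $Y_0$: pick $u\in U\cap X_0$ and $v\in V\cap Y_0$. By definition, $u=\sum_{j=1}^{p}u_j$ and $v=\sum_{k=1}^{q}v_k$ are finite linear combinations of eigenvectors $Tu_j=\mu_j u_j$ with $|\mu_j|<1$ and $Tv_k=\lambda_k v_k$ with $|\lambda_k|>1$. On the (algebraically) linear span $Y_0$, define a linear right-inverse $S$ by $S v_k:=\lambda_k^{-1}v_k$, so that $TS=\mathrm{id}$ on $Y_0$ and $S^n v=\sum_k \lambda_k^{-n}v_k\to 0$ in $X$ because each $|\lambda_k|^{-n}\to 0$. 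Dually, $T^n u=\sum_j \mu_j^{n} u_j\to 0$. Setting
\begin{equation*}
w:=u+S^n v,
\end{equation*}
we obtain $w\to u\in U$ and $T^n w=T^n u+T^nS^n v=T^n u+v\to v\in V$ as $n\to\infty$. Hence for $n$ large enough $w\in U$ and $T^n w\in V$, proving topological transitivity; by Birkhoff, $T$ is hypercyclic.

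For the chaotic statement, it remains to verify that $Per(T)$ is dense. This is immediate from the assumed density of $Z_0$: any eigenvector $x$ with $Tx=e^{i\pi\alpha}x$ and $\alpha=p/q\in\Q$ satisfies $T^{2q}x=e^{2i\pi p}x=x$, so $x\in Per(T)$. By linearity any finite sum of such eigenvectors (with a common period chosen as the l.c.m.) is periodic as well, so $Z_0\subset Per(T)$, and density of $Z_0$ forces density of $Per(T)$.

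The only delicate point is the construction step: one must check that the partial sums $S^n v$ genuinely converge to $0$ in the Fr\'echet topology of $X$ (which is automatic here because $v$ is a \emph{finite} sum, so only finitely many scalar factors $\lambda_k^{-n}$ are involved, and each seminorm of $v_k$ is a fixed constant). A secondary technicality is that Birkhoff's theorem presumes $X$ separable; in the intended application $X=\hr$ this is clear from the monomial basis $\{z^{rn}\}_{n\in\N}$, so no additional hypothesis is needed to invoke the criterion.
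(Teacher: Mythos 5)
This statement is not proved in the paper at all: it is quoted as a known result with a citation to Grosse-Erdmann and Peris, so there is no internal proof to compare against. Your argument is correct and is essentially the standard proof from that reference (Birkhoff transitivity applied to $w_n=u+S^nv$ with $T^nu\to0$ and $S^nv\to0$, plus the observation that unimodular rational-angle eigenvectors are periodic), and you rightly flag the separability hypothesis that the paper's statement omits but that Birkhoff's theorem requires.
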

An operator $\mathcal{L}$ from $\hr$ into itself that satisfies the conditions in the statement of Proposition~\ref{prop : commut conv} is called convolution operator. In the following theorem we obtain the chaos of the convolution operators associated with the operator $B_r$ on $\hr$.
\begin{theorem} If $\mathcal{L}$ is a convolution operator associated with the operator $B_r$ on $\hr$, and if $\mathcal{L}$ is not a scalar multiple of the identity, then it is a chaotic operator.
\end{theorem}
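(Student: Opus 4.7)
The plan is to apply the Godefroy--Shapiro criterion (Theorem~\ref{th : ged-shap}). By Proposition~\ref{prop : commut conv} I write $\mathcal{L}u = T\star_\gamma u$ for a unique $T\in\dhr$. Using the product formula~\eqref{for : prod} together with Definition~\ref{def : conv}, a short computation yields
\begin{equation*}
\mathcal{L}(j_\gamma(\lambda\cdot))(z) = \langle T,T^{\gamma}_{z} j_\gamma(\lambda\cdot)\rangle = j_\gamma(\lambda z)\,\langle T, j_\gamma(\lambda\cdot)\rangle = \phi(\lambda)\,j_\gamma(\lambda z),
\end{equation*}
where $\phi := \mathcal{F}_\gamma(T)$, which by Theorem~\ref{th : pw} belongs to $\e$. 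Thus every $j_\gamma(\lambda\cdot)$ is an eigenfunction of $\mathcal{L}$ with eigenvalue $\phi(\lambda)$. Because $\mathcal{L}$ is not a scalar multiple of the identity and $\{j_\gamma(\lambda\cdot):\lambda\in\C\}$ is total in $\hr$ by Corollary~\ref{cor : dens}, the entire function $\phi$ cannot be constant.

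The strategy is then to verify the three density hypotheses of Theorem~\ref{th : ged-shap} by showing that the sets
\begin{equation*}
\Omega_1 := \phi^{-1}\bigl(\{|w|<1\}\bigr),\qquad \Omega_2 := \phi^{-1}\bigl(\{|w|>1\}\bigr),\qquad \Omega_3 := \phi^{-1}\bigl(\{e^{i\pi\alpha}:\alpha\in\Q\}\bigr)
\end{equation*}
each possess a limit point in $\C$. Once this is done, Corollary~\ref{cor : dens} guarantees that the linear span of $\{j_\gamma(\lambda\cdot):\lambda\in\Omega_j\}$ is dense in $\hr$ for $j=1,2,3$, and these spans sit inside the subspaces $X_0$, $Y_0$, $Z_0$ of \eqref{set : X}--\eqref{set : Z}, respectively, so Theorem~\ref{th : ged-shap} will immediately yield that $\mathcal{L}$ is chaotic.

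The cases $\Omega_1$ and $\Omega_2$ are short Liouville arguments: if $\Omega_2$ were empty, $\phi$ would be a bounded entire function and therefore constant; if $\Omega_1$ were empty, the non-vanishing entire function $1/\phi$ would be bounded and therefore constant. Either conclusion contradicts the non-constancy of $\phi$, so both sets are non-empty; being preimages of open sets under the continuous map $\phi$, they are open in $\C$ and hence automatically have limit points.

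The main obstacle is $\Omega_3$, whose target set is confined to the unit circle $\mathbb{T}$, so one cannot invoke openness. The plan here is to combine connectedness with the open mapping theorem for non-constant holomorphic maps. Since $\phi(\C)$ is connected and meets both $\{|w|<1\}$ and $\{|w|>1\}$, path-connectedness forces $\phi(\C)\cap\mathbb{T}\neq\emptyset$, so there exists $\lambda_0$ with $|\phi(\lambda_0)|=1$. The open mapping theorem sends a small disk around $\lambda_0$ to an open neighborhood $V$ of $\phi(\lambda_0)$ in $\C$, which therefore contains an arc of $\mathbb{T}$. The density of $\{e^{i\pi\alpha}:\alpha\in\Q\}$ in $\mathbb{T}$ then supplies a sequence of distinct points $w_n\in V$ of the form $e^{i\pi\alpha_n}$ with $w_n\to\phi(\lambda_0)$; lifting them to distinct $\lambda_n\in\Omega_3$ inside the disk and extracting a convergent subsequence (Bolzano--Weierstrass) produces the required accumulation point of $\Omega_3$ in $\C$, completing the verification of the Godefroy--Shapiro hypotheses.
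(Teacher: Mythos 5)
Your proposal is correct and follows essentially the same route as the paper: both exhibit the functions $j_\gamma(\lambda\cdot)$ as eigenfunctions of $\mathcal{L}$ with eigenvalue given by a non-constant entire function of exponential type (your $\phi=\mathcal{F}_\gamma(T)$ coincides with the paper's $\Psi(\lambda)=\Phi(e^{i\pi/r}\lambda)$), and both feed the resulting spans into Corollary~\ref{cor : dens} and the Godefroy--Shapiro criterion. The only difference is that you spell out the Liouville arguments for the non-emptiness of the sub/super-level sets and the open-mapping/accumulation-point argument for $\Omega_3$, details the paper leaves implicit or delegates to the reference \cite{linchao}.
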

\begin{proof}By virtue of Proposition \ref{prop : commut conv}, we can find an entire function $\Phi(z)=\sum_{n=0}^{+\infty}\frac{b_n}{\alpha_{rn}(\gamma)}z^{rn}\in\e$, such that $\mathcal{L}=\Phi(B_{r})$. In particular, by Proposition~\ref{pr : jgamma egenfunction}, for every $\lambda\in\C$ we have
\begin{equation*}
\mathcal{L} j_{\gamma}(\lambda \cdot)=\Phi(B_{r})j_{\gamma}(\lambda \cdot)=\sum_{n=0}^{+\infty}\frac{b_n}{\alpha_{rn}(\gamma)}(-\lambda^r)^nj_\gamma(\lambda\cdot)=\Phi(e^{i\pi/r}\lambda)j_{\gamma}(\lambda \cdot),
\end{equation*}
wher $j_\gamma(\lambda\cdot)$ is given by \eqref{eq : jlambda}. To simplify, let $\Psi(\lambda)=\Phi(e^{i\pi/r}\lambda)$, for all $\lambda\in\C$. It follows that for every $\lambda\in\C$, the function %
$j_\gamma(\lambda \cdot)\colon z\mapsto j_{\gamma}(\lambda z)$ is an %
eigenfunction of $\mathcal{L}$ associated with the eigenvalue $\Psi(\lambda)$. Since $\mathcal{L}$ is not a scalar multiple of the identity, the function $\Psi$ is not constant. %
So, the sets $A=\left\{z\in\C\mid\left\vert \Psi(z)\right\vert <1\right\}$ and $B=\left\{z\in \C\mid \left\vert \Psi(z)\right\vert >1\right\}$ are open and nonempty. Hence,
according to  Corollary~\ref{cor : dens},
\begin{equation*}
\textrm{ span}\{f\in \hr\, \mid\, \mathcal{L}f = \Psi(\lambda) f,\quad \textrm{for some $\lambda\in A$}\},
\end{equation*}
and
\begin{equation*}
\textrm{ span}\{f\in \hr\, \mid\, \mathcal{L}f = \Psi(\lambda) f,\quad \textrm{for some $\lambda\in B$}\}
\end{equation*}
are dense in $\hr$. So, Godefroy-Shapiro subspaces $X_0$ and $Y_0$ given by \eqref{set : X} and \eqref{set : Y} are dense in $\hr$. Hence, by Theorem~\ref{th : ged-shap} the operator $\mathcal{L}$ is hypercyclic. On the other hand, using the same method as in \cite{linchao}*{Page~108},
we see that the set
$$\{\lambda\in\C\mid \Psi(\lambda)=e^{\alpha i\pi}, \textrm{ for some $\alpha\in\Q$}\}$$
 has an accumulation point. Hence, by Corollary~\ref{cor : dens}, we conclude that
\begin{equation*}
\mathrm{span}\{j_{\gamma}(\lambda\cdot)\mid\lambda\in \C,\quad \textrm{such that $\Psi(\lambda)=e^{\alpha i\pi}$ for some $\alpha\in\Q$}\}.
\end{equation*}
is dense in $\hr$. So the Godefroy-Shapiro subspace $Z_0$ given by \eqref{set : Z} is dense in $\hr$. Hence, Theorem~\ref{th : ged-shap} implies that $\mathcal{L}$ is chaotic.
\end{proof}
\begin{remark}Under the hypotheses of Theorem~\ref{th : ged-shap}, by Birkoff transitivity theorem \cite{bay}*{Theorem~1.2, Page~2}, the set of hypercyclic vector for $\mathcal{L}$ is a dense $G_\delta$ set of $\hr$ and by Herrero-Bourdon theorem \cite{linchao}*{Theorem 2.55}, there is a linear space $\mathcal{M}$ of $\hr$ such that every element of $\mathcal{M}\setminus\left\{  0\right\}  $ is hypercyclic for $\mathcal{L}$.
\end{remark}
\begin{bibdiv}
\begin{biblist}

\bib{AA}{book}{
      author={Andrews, George~E.},
      author={Askey, Richard},
      author={Roy, Ranjan},
       title={Special functions},
      series={{Encyclopedia of Mathematics and its Applications}},
   publisher={Cambridge Univ. Press},
        date={1999},
      volume={71},
}

\bib{aron}{article}{
      author={Aron, R.},
      author={Markose, D.},
       title={On universal functions},
        date={2004},
     journal={J. Korean Math. Soc.},
      volume={41},
      number={1},
       pages={65\ndash 76},
}

\bib{vectpower}{article}{
      author={Barletta, E.},
      author={Dragomir, S.},
       title={Vector valued holomorphic functions},
        date={2009},
     journal={Bull. Math. Soc. Sci. Math. Roumanie},
      volume={52(100)},
      number={3},
       pages={211\ndash 226},
}

\bib{bay}{book}{
      author={Bayart, F.},
      author={Matheron, É.},
       title={{Dynamics of Linear Operators}},
      series={Cambridge Tracts in Mathematics},
        date={2009},
      volume={179},
}

\bib{Betancor1}{article}{
      author={Belhadj, M.},
      author={Betancor, J.~J.},
       title={Hankel convolution operators on entire functions and
  distributions},
        date={2002},
     journal={J.~Math.~Anal.~Appl},
      volume={276},
       pages={40\ndash 63},
}

\bib{Ber2}{book}{
      author={Berenstein, C.~A.},
      author={Gay, R.},
       title={{Complex Variables: an Introduction}},
     edition={2},
      series={Graduate Texts in Mathematics},
   publisher={Springer-Verlag},
     address={New York Inc},
        date={1997},
      volume={125},
}

\bib{Betancor2}{article}{
      author={Betancor, J.~J.},
      author={Betancor, J.~D.},
      author={Méndez, J.~M.},
       title={Hypercyclic and chaotic convolution operators in
  {Chébli-Trimèche} hypergroups},
        date={2004},
     journal={Rocky Mountain J. Math.},
      volume={34},
       pages={1207\ndash 1238},
}

\bib{Betancor3}{article}{
      author={Betancor, J.~J.},
      author={Sifi, M.},
      author={Triméche, K.},
       title={Hypercyclic and chaotic convolution operators associated with the
  {D}unkl operator on $\mathbb{C}$},
        date={2005},
     journal={Acta Mathematica Hungarica},
      volume={106},
      number={1-2},
       pages={101\ndash 116},
}

\bib{birk}{article}{
      author={Birkhoff, G.~D.},
       title={Demonstration d'un théorème élementaire sur les fonctions
  entières},
        date={1929},
     journal={C. R. Acad. Sci. Paris},
      volume={189},
       pages={473\ndash 475},
}

\bib{Boch55}{article}{
      author={Bochner, S.},
       title={{Sturm-Liouville} and heat equations whose eigenfunctions are
  ultraspherical polynomials or associated {B}essel functions},
        date={1955},
     journal={Proc. of the Conf. on Diff. Equations, University of Maryland},
       pages={23\ndash 48},
}

\bib{Bonet}{article}{
      author={Bonet, J.},
       title={Hypercyclic and chaotic convolution operators},
        date={2000},
     journal={J.London Month. Soc.},
      volume={62},
      number={2},
       pages={252\ndash 262},
}

\bib{Dunkl-Bouz}{article}{
      author={Bouzeffour, F.},
       title={Special functions associated with complex reflection groups},
        date={2013--7--18},
     journal={The Ramanujan Journal},
}

\bib{fmren}{article}{
      author={Cholewinski, F.~M.},
      author={Reneke, J.~A.},
       title={The generalized {A}iry diffusion equation},
        date={2003},
     journal={Electronic Journal of Differential Equations},
      volume={2003},
      number={87},
       pages={1\ndash 64},
}

\bib{dels}{article}{
      author={Delsarte, J.},
       title={Sur une extension de la formule de {T}aylor},
        date={1938},
     journal={Journ. de Math.},
      volume={tome XVII},
      number={Fasc. III},
}

\bib{FITsab1}{article}{
      author={Fitouhi, A},
      author={Ben~Hammouda, M.~S.},
      author={Binous, W.},
       title={On a third singular differential operator and transmutation},
        date={2005},
     journal={Far East J.Math.Sci.(FJMS)},
}

\bib{FIT2}{article}{
      author={Fitouhi, A.},
      author={Mahmoud, N.~H.},
      author={Ould Ahmed~Mahmoud, S.~A.},
       title={Polynomial expansions for solutions of higher-order {B}essel heat
  equations},
        date={1997},
     journal={JMAA},
      volume={206},
       pages={155\ndash 167},
}

\bib{gode}{article}{
      author={Godefroy, G.},
      author={Shapiro, J.~H.},
       title={Operators with dense, invariant cyclic vector manifolds},
        date={1991},
     journal={J. Funct. Analysis},
      volume={98},
      number={2},
       pages={229\ndash 269},
}

\bib{linchao}{book}{
      author={Grosse-Erdmann, K.-G.},
      author={Peris, A.},
       title={{Linear Chaos}},
      series={Universitext},
   publisher={Springer-Verlag London Limited},
        date={2011},
}

\bib{destopo}{book}{
      author={Kakol, J.},
      author={Kubi{\'s}, W.},
      author={Pellicer, L.},
       title={{Descriptive Topology in Selected Topics of Functional
  Analysis}},
      series={Developments in Mathematics},
   publisher={Springer-Verlag New York Inc.},
        date={2011},
      volume={24},
}

\bib{kim}{article}{
      author={Kim, V.~E.},
       title={Hypercyclicity and chaotic character of generalized convolution
  operators generated by {Gelfond-Leontev} operators},
        date={2009},
     journal={Mat. Zametki},
      volume={85},
      number={6},
       pages={849\ndash 856},
}

\bib{kiryakova1}{book}{
      author={Kiryakova, V.},
       title={{Generalized Fractional Calculus and Applications}},
   publisher={Longman, Harlow; John Wiley, N. York},
        date={1994},
}

\bib{M.I.Klyu}{article}{
      author={Klyuchantsev, M.~I.},
       title={Singular differential operators with $r-1$ parameters and
  {B}essel functions of vector index},
        date={1983},
     journal={Siberian Math},
      volume={24},
       pages={53\ndash 366},
}

\bib{macl}{article}{
      author={MacLane, G.~R.},
       title={Sequences of derivatives and normal families},
        date={1952},
     journal={J. Anal. Math.},
      volume={2},
      number={1},
       pages={72\ndash 87},
}

\bib{vogt}{book}{
      author={Meise, R.},
      author={Vogt, D.},
  translator={Ramanujan, M.~S.},
       title={{Introduction to Functional Analysis}},
      series={Oxford Graduate Texts in Mathematics},
   publisher={Clarendon Press, Oxford},
        date={1997},
}

\bib{Rubel}{book}{
      author={Rubel, L.~A.},
      author={Colliander, J.~E.},
       title={{Entire and Mermorphic Functions}},
      series={Universitext},
   publisher={Springer-Verlag New York, Inc},
        date={1996},
}

\end{biblist}
\end{bibdiv}
\end{document}